\def\R{{\mathbb R}}
\def\N{{\mathbb N}}
\def\bz{\mathbf z}
\def\bx{\mathbf x}
\def\bk{\mathbf k}
\def\le{\leqslant}
\newcommand{\re}{\mathrm{Re}}
\newcommand{\im}{\mathrm{Im}}
\newcommand{\eps}{\e}
\newcommand{\e}{\varepsilon}
\newcommand{\pdf}{{\mathcal P}}
\newcommand{\pad}[2]{\frac{\partial{#1}}{\partial{#2}}}
\newtheorem{theorem}{THEOREM}[section]
\newtheorem{lemma}{LEMMA}[section]
\title{Gaussian wave packet transform based numerical scheme for the semi-classical Schr\"odinger equation with random inputs
 \footnote{
S. Jin was supported by NSF grants DMS-1522184 and DMS-1107291: RNMS KI-Net, and NSFC grants No.\,31571071 and No.\,11871297.
L. Liu was partially supported by the funding DOE--Simulation Center for Runaway Electron Avoidance and Mitigation, project No.\,DE-SC0016283. 
G. Russo was partially funded by the ITN-ETN Marie-Curie Horizon 2020 program
ModCompShock, Modeling and computation of shocks and interfaces, project ID: 642768 and NSF grant No.\,DMS-1115252. 
Z. Zhou was partially supported by a start-up fund from Peking University and NSFC grant No.\,11801016. }}
\author{Shi Jin\footnote{School of Mathematical Sciences, Institute of Natural Sciences, MOE-LSC and SHL-MAC, Shanghai Jiao Tong University, Shanghai, China (shijin-m@sjtu.edu.cn)}, Liu Liu\footnote{The Institute for Computational Engineering and Sciences (ICES), University of Texas at Austin, Austin, TX 78712, USA (lliu@ices.utexas.edu)}, Giovanni Russo\footnote{Department of Mathematics and Computer Science, University of Catania, Via A.Doria 6, 95125, Catania, Italy (russo@dmi.unict.it)}\, and Zhennan Zhou\footnote{Beijing International Center for Mathematical Research, Peking University, Beijing, China (zhennan@bicmr.pku.edu.cn)}}
\begin{document}
\maketitle

\begin{abstract}
In this work, we study the semi-classical limit of the Schr\"odinger equation with random inputs, and show that the semi-classical Schr\"odinger equation produces $O(\e)$ oscillations in the random variable space. With the Gaussian wave packet transform, the original Schr\"odinger  equation is mapped to an {\color{black}ordinary differential equation (ODE)} system for the wave packet parameters coupled with a {\color{black}partial differential equation (PDE)} for the quantity $w$ in rescaled variables. Further, we show that the $w$ equation does not produce $\e$ dependent oscillations, and thus it is more amenable for numerical simulations. We propose multi-level sampling strategy in implementing the Gaussian wave packet transform, where in the most costly part, i.e. simulating the $w$ equation, it is sufficient to use $\e$ independent samples. We also provide extensive numerical tests as well as meaningful numerical experiments to justify the properties of the numerical algorithm, and hopefully shed light on possible future directions.
\end{abstract}

\section{Introduction}

In simulating physical systems, which are often modeled by differential equations, there are inevitably modeling errors, imprecise measurements of the initial data or the background coefficients, which may bring about uncertainties to the equation. There has been a growing interest in analyzing such models to understand the impact of these uncertainties, and thus design efficient numerical methods. 

When it comes to quantum dynamics, quantifying the effect of uncertainty is even a trickier task. The solution to the Schr\"odinger equation is a complex valued wave function, whose nonlinear transforms (e.g. position density, flux density) lead to probabilistic measures of the physical observables. Thus, the uncertainty in the Schr\"odinger equation may or may not result in changes in measurable quantities from the quantum state.

We consider the following semi-classical Schr\"odinger equation with random inputs 
\begin{equation} 
i\e\partial_{t}\psi^{\e}(t,\mathbf x,\mathbf z)=-\frac{\e^2}{2}\Delta_{\mathbf x}\psi^{\e}(t,\mathbf x,\mathbf z)+V(\mathbf{x},\mathbf z)\psi^{\e}(t,\mathbf x,\mathbf z),
\label{eq:main equationz}
\end{equation}
\begin{equation}
\label{eq:main equationz0}
\psi^{\e}(0,\mathbf x,\mathbf z)= \psi^{\e}_{\text{in}}(\mathbf x,\mathbf z).
\end{equation}
Here, $\e \ll 1$ is the semi-classical parameter, which is reminiscent of the scaled Plank constant, and $V(\mathbf{x},\mathbf z)$ is the scalar potential function, which is slow-varying and often used to model the external field. The initial condition $\psi^{\e}_{\text{in}}(\mathbf x,\mathbf z)$ will be assumed to be in the form of a semi-classical wave packet, which is a Gaussian wave packet parameterized by the wave packet position, the wave packet momentum, etc. 

The uncertainty is described by the random variable $\bz$, which lies in the random space $I_{\bz}$ with a probability measure $\pi(\bz)d \bz$.  
We introduce the notation for the expected value of $f(\bz)$ in the random variable $\mathbf z$, 
\begin{equation}
\langle f \rangle_{\pi(\mathbf z)} = \int f(\bz) \pi(\mathbf z) d \mathbf z. 
\end{equation}
In this paper, we only consider the uncertainty coming from initial data and potential functions, that is the uncertainty is {\it classical}. 
For example, the external classical field, the wave packet position or the wave packet momentum is uncertain, 
which reflects on the uncertainty in physical observables.

We do not, however, aim to analyze different types of uncertainties in quantum dynamics, but rather, we study how the uncertainty propagates in the semi-classical Schr\"odinger equation. When $\e \ll 1$, it is well known that the Schr\"odinger equation is in the high frequency regime, where the solution generates $O(\e)$ scaled oscillations in space and time. As we shall show in this paper, the solution generically propagates $O(\e)$ scaled oscillations in the $\bz$ variable as well even if the random variable $\bz$ obeys an $\e$ independent probability distribution.  The high frequency of the solution in space, time and uncertainty leads to unaffordable computational cost, which makes conventional numerical methods infeasible. Thus, it is of great interest to design efficient numerical method based on the multiscale nature of the analytical solutions.

In the semi-classical regime, due to the $O(\varepsilon)$ scaled oscillation in the solution to the Schr\"odinger equation, the wave function $\psi^{\varepsilon}$ does
not converge in the strong sense as $\varepsilon\rightarrow0$. The high frequency nature of the wave function of the semi-classical
 Schr\"odinger equation also causes significant computation burdens. If one aims for direct simulation of the wave function, one of the best choices is the time splitting spectral method, as analyzed in \cite{BaoJin} by Bao, Jin and Markowich. See also \cite{reviewsemiclassical, SL-TS, NUFFT, S-LLG}, where the meshing strategy $\Delta t=O(\varepsilon)$
and $\Delta x=O(\varepsilon)$ is sufficient for moderate values of $\varepsilon$. 
Another advantage of the time splitting methods is that if one is only interested in the physical observables, the time step size can be relaxed to $o(1)$, in other words, independently of $\varepsilon$, whereas one still needs to resolve the spatial
oscillations. As we shall show in the paper, when the uncertainty is present, the wave function is also highly oscillatory in the $\bz$ variable, which means the size of samples in random variable grows as $\e \rightarrow 0$ in order to obtain accurate approximations of the quantum dynamics. 

There are quite a few approximate methods
other than directly simulating the semi-classical Schr\"odinger equation, which are valid in the limit $\varepsilon\to 0$, 
such as the level set method and the moment closure method based on the WKB analysis and the Wigner transform, see, for example, \cite{reviewsemiclassical} for a general discussion. In the past few years, many wave packets based methods have been introduced, which reduce the full quantum dynamics to Gaussian wave packets dynamics \cite{Heller,Heller2,Hagedorn}, and thus gain significant savings in computation cost, such as the Gaussian beam method \cite{Ralston,EGB,QianYing}, the Hagedorn wave packet approach \cite{LubichHagedorn,ZhouHagedorn} and the Frozen Gaussian beam method \cite{Kay1,Kay2,FGALuYang,FGALuZhou1,FGALuZhou2}. When random inputs are considered, in theory, one can design numerical methods based on those approximation tools with $\e$ independent samples in $\bz$, however, the approximation errors persist in spite of other potential challenges.

A related work to our current subject is \cite{NGO-17}, where the authors developed the generalized polynomial chaos (gPC)-based 
stochastic Galerkin method for a class of highly oscillatory transport equations containing uncertainties
that arise in semi-classical modeling of non-adiabatic quantum dynamics. 
Built upon and modified from the nonlinear geometrical optics based method, 
this scheme can capture oscillations with frequency-independent time step, mesh size as well as degree of the polynomial.

Clearly, an exact reformulation of the semi-classical Schr\"odinger equation that separates the multiscales in dynamics is desired for efficient simulation. Very recently, Russo and Smereka proposed a new method based on the so-called Gaussian wave packet transform \cite{GWPT,GWPT2,ZR}, which reduces the quantum dynamics to Gaussian wave packet dynamics together with the time evolution of a rescaled quantity $w$, which satisfies another equation of the Schr\"odinger type, in which the modified potential becomes time dependent. We emphasize that the Gaussian wave packet transform is an equivalent reformulation of the full quantum dynamics, and there are no more $\e$ dependent oscillations in the $w$ equation. This motivates us to investigate whether this transform facilitates design of efficient numerical methods for semi-classical Schr\"odinger equations with random inputs.

In this work, we study the semi-classical limit of the Schr\"odinger equation with random inputs, which is the Liouville equation with a random force field, and show that the semi-classical Schr\"odinger equation produces $O(\e)$ oscillations in the $\bz$ variable in general. However, with the Gaussian wave packet transform, the original Schr\"odinger equation is mapped to an ODE system for the wave packet parameters coupled with a PDE for the quantity $w$ in rescaled variables, where the ODE system and the $w$ equation also depend on the random variable. Further, we show that the $w$ equation does not produce $\e$ dependent oscillations in the rescaled spatial variable, thus it is more amenable for numerical simulations. We propose multi-level sampling strategy in implementing the Gaussian wave packet transform, where in the most costly part, simulating the $w$ equation, it is sufficient to use $\e$ independent samples, 
thus the complexity of the whole algorithm has satisfactory scaling behavior as $\e$ goes to $0$. We also provide extensive numerical tests as well as meaningful numerical experiments to justify the properties of the numerical algorithm as well as hopefully shed light on possible future directions.

The rest of the paper is outlined as follows. Section \ref{sec:2} discusses the semi-classical limit of the Schr\"odinger equation
and analyzes the regularity of $\psi$ in the random space. In Section \ref{sec:3}, we introduce the Gaussian wave packet transform 
and prove that the $w$ equation is not oscillatory in the random space. {\color{black}Section \ref{sec:3.5} briefly discusses the comparison between quantum and classical systems.}
Section \ref{sec:4} shows extensive numerical tests by using the 
stochastic collocation method to demonstrate the efficiency and accuracy of our proposed scheme. Relations of different numbers of the collocation points needed in each step of the implementation will be explained and studied 
numerically. Conclusion and future work are given in Section \ref{sec:5}. 

\section{The semi-classical Schr\"odinger equation with random inputs}
\label{sec:2}

\subsection{The semi-classical limit with random inputs}
\label{Semi-Limit}
In this part, we investigate the semi-classical limit of the  Schr\"odinger equation with random inputs by the Wigner transform 
\cite{Wigner,reviewsemiclassical,Bal,Lions}. Obviously, the potential function $V(\mathbf x,\bz)$ can be decomposed as
\begin{equation}\label{potde}
V(\mathbf x,\bz)=\bar V(\mathbf x)+ N(\mathbf x, \bz),
\end{equation}
such that
\[
\langle V \rangle_{\pi} = \bar V, \quad  \langle N \rangle_\pi =0.
\]

For $f,\, g\in L^{2}(\mathbb{R}^{d})$,
the Wigner transform is defined as a phase-space function
\begin{equation}
W^{\varepsilon}(f,g)\left(t,\bx,\bm \xi\right) = \frac{1}{(2\pi)^{d}}\int_{\R^{d}}e^{i \mathbf y\cdot \bm \xi}\bar{f}
\left(\bx+\frac{\varepsilon}{2}\mathbf y\right)g\left(\bx-\frac{\varepsilon}{2}\mathbf y\right)d \mathbf y, 
\end{equation}
{\color{black}where $\bar{f}$ represents the  complex conjugate of $f$. }

Recall that $\psi^{\varepsilon}(t,\bx)$ is the exact solution of equation
\eqref{eq:main equationz}. Denote $W^{\varepsilon}( t, \bx, \bm \xi, \bz)=W^{\varepsilon}(\psi^{\varepsilon},\psi^{\varepsilon}),$
it is possible to prove that $W^{\varepsilon}$ satisfies the Wigner equation \cite{Wigner, BaoJin}
\begin{equation}
\partial_{t}W^{\varepsilon}+\bm \xi \cdot\nabla_{\bx}W^{\varepsilon}+\Theta[V]W^{\varepsilon}=0,
\end{equation}
in which $\Theta[V]W^{\varepsilon}$ is a pseudo-differential operator acting on $W^\e$  defined by
\begin{equation}
\Theta[V]W^{\varepsilon}:=\frac{i}{(2\pi)^{d}\varepsilon}\int_{\R^{d}}\left(V\left(\bx+\frac{\varepsilon}{2} \bm \alpha\right)-V\left(\bx-\frac{\varepsilon}{2} \bm \alpha\right)\right)\hat{W}^{\varepsilon}(t,\bx,\bm \alpha, \bz)e^{i\bm \alpha\cdot\bm \xi}\,d \bm \alpha, 
\end{equation}
{\color{black}where $\hat W$ represents the Fourier transform of $W$ with respect to the momentum component $\xi$.}

By Weyl's Calculus (see \cite{Wigner}) as $\varepsilon\rightarrow0$, the Wigner measure
$W^{0}=\lim_{\varepsilon\rightarrow0}W^{\varepsilon}({\color{black}\psi^{\varepsilon}}, {\color{black}\psi^{\varepsilon}})$
satisfies the classical Liouville equation 
\begin{equation}
W_{t}^{0}+\bm \xi \cdot\nabla_{\bx}W^{0}-\nabla_{\bx}V \cdot\nabla_{\bm \xi}W^{0}=0,
\end{equation}
with
\begin{equation}
W^{0}(t=0,\bx,\bm\xi,\bz)=W_{I}^{0}(\bx,\bm\xi,\bz):=\lim_{\varepsilon\rightarrow0}W^{\varepsilon}(\psi_{0}^{\varepsilon},\psi_{0}^{\varepsilon}).
\end{equation}
All the limits above are defined in an appropriate weak sense (see \cite{Wigner,Lions}).

With the decomposition of the potential as in \eqref{potde}, the classical Liouville equation  becomes
\begin{equation} \label{eq:WigMea}
W_{t}^{0}+\bm \xi \cdot\nabla_{\bx}W^{0}-\nabla_{\bx} \bar V \cdot\nabla_{\bm \xi}W^{0}-\nabla_{\bx} N \cdot\nabla_{\bm \xi}W^{0}=0.
\end{equation}
This clearly shows, due to the random potential, the bi-characteristics of the Liouville equation contains 
the random force term with $\langle -\nabla_{\bx} N \rangle_\pi =-\nabla_{\bx} \langle  N \rangle_\pi =0$, and the characteristic equations are
\begin{equation}
\left\{
\begin{split}
&\dot \bx =\bm \xi, \\
&\dot {\bm \xi} = -\nabla_{\bx} \bar V-\nabla_{\bx} N.
\end{split}
\right.
\end{equation}
Also, by definition, it is easy to check that $\forall \mathbf z \in I_{\mathbf z}$, $W^\eps$ is real-valued. To sum up, in the semi-classical limit, the Wigner measure $W^0$ picks up the dependence of the random variable $\mathbf z$ though the initial condition and the vector field $\nabla_{\bx} N$.

Next, we discuss if one can derive the averaged equation to integrate out the random variable $\mathbf z$. 
In equation \eqref{eq:WigMea}, by taking the average with respect to the random variable $\bz$, one gets 
\[
\partial_t \langle W^0 \rangle_\pi +\bm \xi \cdot\nabla_{\bx} \langle W^{0} \rangle_\pi-\nabla_{\bx} \bar V \cdot\nabla_{\bm \xi} \langle W^{0}\rangle_{\pi}- \langle \nabla_{\bx} N \cdot\nabla_{\bm \xi}W^{0} \rangle_\pi.
\]
Notice that, in the last term, both $N$ and $W^0$ depend on $\bz$, thus it cannot be directly written as a term involving $\langle W^0 \rangle_\pi$, 
rather it connects with the covariance of $\nabla_{\bx}N$ and $\nabla_{\bm \xi}W^0$. In fact, 
\begin{align*}
- \langle \nabla_{\bx} N \cdot\nabla_{\bm \xi}W^{0} \rangle_\pi & = - \langle (\nabla_{\bx} N - \nabla_{\bx} \langle N \rangle_\pi) \cdot  ( \nabla_{\bm \xi} W^{0} - \nabla_{\bm \xi} \langle W^0 \rangle_\pi )\rangle_\pi - \nabla_{\bx} \langle N \rangle_\pi \cdot\nabla_{\bm \xi} \langle W^0 \rangle_\pi \\
&= - {\rm Cov}( \nabla_{\bx} N, \nabla_{\bm \xi}W^0), 
\end{align*}
since $\nabla_{\bx} \langle N \rangle_\pi =0$. 

To illustrate the effect of the random potential, we consider the following special case
\[
\bar V= \frac{1}{2}|\bx|^2, \quad N = -\bx \cdot \mathbf g(\bz). 
\] 
In this case, the Liouville equation simplifies to 
\begin{equation}\label{sliou}
W_{t}^{0}+\bm \xi \cdot\nabla_{\bx}W^{0}- \bx \cdot\nabla_{\bm \xi}W^{0}+ \mathbf g(\bz) \cdot\nabla_{\bm \xi}W^{0}=0.
\end{equation}
Then, if one considers the following change of variables
\[
\bx=\tilde \bx + \mathbf g (\bz), \quad \bm \xi = \tilde{\bm \xi},
\]
then, equation \eqref{sliou} becomes
\begin{equation}\label{sliou2}
W_{t}^{0}+ \tilde{\bm \xi} \cdot\nabla_{\tilde \bx}W^{0}- \tilde{\bx} \cdot\nabla_{\tilde{\bm \xi}}W^{0}=0.
\end{equation}
And correspondingly, the initial condition becomes
\[
W^{0}(t=0,\tilde \bx, \tilde{\bm\xi},\bz)=W_{I}^{0}(\tilde \bx + \mathbf g (\bz), \tilde{\bm \xi}).
\]
Thus, the average with respect to the random variable $\bz$ can be taken and one gets a closed equation for $\langle W^{0} \rangle_\pi$,
\begin{equation}\label{sliou3}
\partial_t \langle W^{0} \rangle_\pi+ \tilde{\bm \xi} \cdot\nabla_{\tilde \bx} \langle W^{0} \rangle_\pi - \tilde{\bx} \cdot\nabla_{\tilde{\bm \xi}}\langle W^{0} \rangle_\pi=0.
\end{equation}
and 
\[ \langle W^{0} \rangle_\pi (t=0,\tilde \bx, \tilde{\bm\xi})= \langle W_{I}^{0}(\tilde \bx + \mathbf g (\cdot) ,\tilde{\bm \xi}) \rangle_\pi.\]

This example shows that the randomness in the slow-varying potential changes the transport part of the Liouville equation, although in the averaged equation the transport structure may be even unchanged. 

When the potential is random and with a general form of $z$ dependence, the two processes of  a) first pushing $\e\to 0$ then taking expected value in $z$ of the classical limit (the Liouville equation); 
and b) first taking expected value in $z$ on the Wigner equation then letting $\e\to 0$ do not commute. Though numerically, our simulation results seem to suggest the commutation of these two iterated processes. In later section we will make a numerical comparison to show that the expected values of the position density and flux do converge in the $\varepsilon\to 0$ limit. 

To conclude this part, we remark that in \cite{Bal,AMS98} and subsequent works, the authors have considered apparently a related but fundamentally different model, where the unperturbed system is the semi-classical Schr\"odinger with fast-varying smaller magnitude
and random perturbation in the potential is also fast-varying. 
And they show that the randomness in that scaling introduces additional scattering terms in the limit equations, while in our case the randomness only persist in the initial data and the force field in the limit equation.

\subsection{Regularity of $\psi$ in the $\bz$ variable}

The semi-classical Schr\"odinger equation is a family of dispersive wave equations parameterized by $\e \ll 1$, and it is well known that the wave equation propagates $O(\e)$ scaled oscillations in space and time. However, it is not clear yet whether the small parameter $\e$ induces oscillations in the random variable $\mathbf z$.

Here and in subsection \ref{Reg-W}, we will conduct a regularity analysis of $\psi$ in the random space, which enables us to study the oscillatory behavior of solutions in the random space, which gives guidance on how many collocation points needed
in each step of the collocation method should depend on the scaled constant $\e$. 

To investigate the regularity of the wave function in the $\bz$ variable, we check the following averaged norm 
\begin{equation}
\label{energy} ||f||_{\Gamma }:= \left (\int_{I_z}\int_{\mathbb R^3}\left| f(t, \mathbf x, \bz)\right|^2\,  d{\mathbf x}\pi(\bz)d{\bz} \right)^{\frac 1 2}. 
\end{equation}
To be more precise, (\ref{energy}) denotes the square root of the expected value in $\bz$ of the square of the $L^2(x)$ norm of $f$. 
We name it $\Gamma$-norm for short.

One first observes that
$\forall\, \mathbf z \in I_{\mathbf z}$, 
\[
	\frac{\partial}{\partial t} \| \psi^{\e}\|^2_{L^2_{\mathbf x}}(t,\bz)=0, 
\]
thus 
\[
\frac{d}{dt} \| \psi^{\e}\|_{\Gamma}^2=0,
\]
which means the $\Gamma$-norm of the wave function $\psi^\e$ is conserved in time,
\[
\| \psi^{\e}\|_{\Gamma} (t) =  \| \psi^{\e}_{\text{in}}\|_{\Gamma}\,. 
\]

However, we show in the following that $\psi^\e$ has $\e$-scaled oscillations in $\mathbf z$ even if $V$ and $\psi_{\text{in}}^{\e}$ do not have 
$\e$-dependent oscillations in $\mathbf z$. 
We first examine the first-order partial derivative of $\psi^\e$ in $z_1$, and denote $\psi^1= \psi^\e_{z_1}$ and $V^1=V_{z_1}$, then by differentiating the semi-classical schr\"odinger equation (\ref{eq:main equationz}) with respect to $z_1$, one gets
\[
    i \e \psi^1_t =- \frac{\e^2}{2}\Delta_{\mathbf x} \psi^1 + V^1 \psi^\e + V \psi^1.
\]

By direct calculation, 
\begin{align*}
\frac{d}{dt} \| \psi^{1}\|^2_{\Gamma} &= \int \bigl (\psi^1_t \bar \psi^1 + \psi^1 \bar \psi^1_t \bigr) \pi d \mathbf x d \mathbf z \\
& = \int \bigl(\frac{1}{i\e} V^1 \psi^\e \bar \psi^1  - \frac{1}{i\e} V^1 \psi^1 \bar \psi^\e \bigr) \pi d \mathbf x d \mathbf z \\
& \le \frac{2}{\e} \| \psi^{1}\|_{\Gamma}\, \| V^1 \psi^{\e}\|_{\Gamma}\,,
\end{align*}
where the Cauchy-Schwarz inequality and the Jensen inequality are used in the last step, more specifically, 
\begin{align*}
&\int V^{1}\psi^{\e}\bar\psi^{1} dx \leq \left( \int (V^{1}\psi^{\e})^2 dx \right)^{1/2} \left( \int (\bar\psi^{1})^2 dx\right)^{1/2}, \\
&\int\int V^{1}\psi^{\e}\bar\psi^{1} dx\, \pi(z)dz \leq \left(\int \left(\int V^{1}\psi^{\e}\bar\psi^{1} dx\right)^2 \pi(z)dz\right)^{1/2} \leq ||V^{1}\psi^{\e}||_{\Gamma}\, 
||\psi^{1}||_{\Gamma}\,.
\end{align*}
Thus
\[
    \frac{d}{dt} \| \psi^{1}\|_{\Gamma} \le\frac{1}{\e} \| V^1 \psi^{\e}\|^2_{\Gamma}\,, 
\]
For $t=O(1)$, the pessimistic estimate implies
\[  \| \psi^{1}\|_{\Gamma} =O\bigl(\e^{-1}\bigr). 
\]
Moreover, for $\mathbf k=(k_1,k_2,\cdots,k_n) \in \N^n$, denote $|\mathbf k|=\sum_{j=1}^n k_j $, we can similarly conclude that 
\begin{equation}\label{est:psiz}
 \|\partial_{\mathbf z}^{\mathbf  k}  \psi^{\e}\|_{\Gamma} =O\bigl( {\e^{-|\mathbf k|}}\bigr). 
\end{equation}

Although the estimates above are apparently pessimistic, we would like to show that the high frequency oscillations in $\mathbf z$ can be seen
in the following example. For simplicity, we consider $x$ be one dimensional. If the potential $V$ is quadratic in $x$, it has been shown by Heller in \cite{Heller} that,
\begin{equation}
\label{Heller}
\phi(x,t)=\exp \left[ i \frac{\alpha(t)\bigl(x-q(t)\bigr)^2-p(t)\bigl(x-q(t)\bigr)+ \gamma(t) }{\e}\right]
\end{equation}
is an exact solution to the semi-classical Schr\"odinger equation, provided that, $q(t)$, $p(t)$, $\alpha(t)$ and $\gamma(t)$ satisfy the following system of equations
\begin{equation} \label{psys2}
\left\{ \begin{array}{l}
 \dot q = p,
\\ \dot p =  - V_q(q),
\\ \dot \alpha = -2 \alpha^2 -\frac{1}{2}V_{qq}(q),
\\ \dot \gamma =\frac{1}{2}p^{2}-V(q)+i\e \alpha.
\end{array}\right.
\end{equation}

Due to the same reason,
\[
\Phi(t,x,\mathbf z)=\exp \left[ i \frac{\alpha(t,\mathbf z)\bigl(x-q(t,\mathbf z)\bigr)^2-p(t,\mathbf z)\bigl(x-q(t,\mathbf z)\bigr)+ \gamma(t,\mathbf z) }{\e}\right]
\]
is an exact solution to equation (\ref{eq:main equationz}), when potential $V(x, \mathbf z)$ is quadratic in $x$.  
Clearly, this specific solution saturates the estimate \eqref{est:psiz}, which implies, even if initially $\psi^\e$ is smooth in $\mathbf z$, it will pick up $\e$-dependent oscillations in the $\mathbf z$ variables. 
In Section \ref{sec:4}, we will also show numerically the $\e$-scaled oscillations of such wave functions in the $\mathbf z$ variable. 

To conclude this section, we emphasize the numerical challenges with respect to the random variable $\bz$. Due to the oscillatory behavior in $\mathbf z$,
if one applies the generalized polynomial chaos (gPC)-based stochastic methods
directly to the semi-classical Schr\"odinger equation, one needs at least $\e$-dependent basis functions or quadrature points to get an accurate approximation.
The stochastic collocation method will be discussed in detail in subsection \ref{sec:4a}. 

\section{The semi-classical Schr\"odinger equation and the Gaussian wave packet transformation}
\label{sec:3}

To overcome the numerical burdens in sampling the random variable $z$, we introduce the Gaussian wave packet transformation (abbreviated by GWPT), which has been proven to be a very efficient tool for computing the Schr\"odinger equation in the high frequency regime. In essence, the GWPT equivalently transforms the highly oscillatory wave equation to {\color{black}an equation for a new rescaled wave function $w$}, thus facilitate the design of efficient numerical methods. 
{\color{black}At variance with most methods based on the Gaussian beam or Gaussian wave packet, the
GWPT approach is not based on an asymptotic expansion in $\e$, therefore
it is equivalent to the original Schr\"odinger equation for {\it all} $\e$.}
The main goal of this section is to study the {\it uniform regularity} of $w$ in the $\bz$ variable, 
and conclude that the number of basis functions or quadrature points is {\it independent of $\e$},
if applying the SG or SC method in our GWPT framework. 

\subsection{Review of the Gaussian wave packet transformation}

First, we briefly summarize the Gaussian wave packet transformation applied to the semi-classical Schr\"odinger equation with random inputs, 
which is a natural extension of the GWPT method for the deterministic problem \cite{GWPT}. Consider the semi-classical Schr\"odinger equation given by (\ref{eq:main equationz})--(\ref{eq:main equationz0}). Note that in this work, random inputs are assumed to be classical, thus we only consider the cases when the wave packet position and momentum in the GWPT parameters depend on the random variable $\bz$. 

We start by the following {\it ansatz}
\begin{equation}\label{ansatz}
\psi(t,\mathbf x,\bz)= \widetilde w(t,\boldsymbol\xi,\bz)\exp \left(g(t,\boldsymbol\xi,\bz)\right):=\widetilde w(t, \boldsymbol\xi, \bz)\exp \left(i\left(\boldsymbol\xi^{T}
\boldsymbol{\alpha}_R\, \boldsymbol\xi + {\mathbf p}^{T}\boldsymbol\xi +\gamma\right)/\e\right),
\end{equation}
where $\boldsymbol\xi=\mathbf x - \mathbf q$, $\boldsymbol{\alpha}_R$ is a real-valued symmetric matrix and $\gamma$ is a complex-valued scalar.

Denote $\boldsymbol{\alpha}_R=\re (\boldsymbol{\alpha})$, $\boldsymbol{\alpha}_I=\im (\boldsymbol{\alpha})$ and 
$\boldsymbol{\alpha}=\boldsymbol{\alpha}_R + i \boldsymbol{\alpha}_I$. 
Insert the ansatz (\ref{ansatz}) into (\ref{eq:main equationz}), then $\widetilde w(t, \boldsymbol\xi, \bz)$ satisfies
\begin{eqnarray*}
	\widetilde w_{t}  = -2\, \boldsymbol\xi^{T} \boldsymbol{\alpha}_R \nabla_{\boldsymbol\xi} \widetilde w + \frac{i \e}{2} \Delta_{\boldsymbol\xi}\widetilde w
	 - \frac{i}{\e} \left(U_{r}+2\, \boldsymbol\xi^T \boldsymbol{\alpha}_I^2\, \boldsymbol\xi \right)\widetilde w,
\end{eqnarray*}
provided $\mathbf p$, $\mathbf q$, $\boldsymbol\alpha$ satisfy the following equations
\begin{equation} \label{psys}
\left\{ \begin{array}{l}
 \dot {\mathbf q} = \mathbf p,
\\ \dot {\mathbf p} =  -\nabla V(\mathbf q),
\\ \dot {\boldsymbol\alpha} = -2\, \boldsymbol{\alpha}^2 - \frac{1}{2}\, \nabla\nabla V(\mathbf q), 
\\ \dot \gamma = \frac{1}{2}\, {\mathbf p}^T \mathbf p - V(\mathbf q) + i\e\, {\rm Tr}(\boldsymbol{\alpha}_R), 
\end{array}\right.
\end{equation}
while 
\[
	U_r=V(\boldsymbol\xi + \mathbf q)-V(\mathbf q)-\boldsymbol\xi^T\, \nabla V(\mathbf q)-\frac{1}{2}\, \boldsymbol\xi^T\, \nabla^2 V(\mathbf q)\boldsymbol\xi. 
\]
At last, introduce the change of variables $\widetilde w(t,\boldsymbol\xi, \bz)=w(t,\boldsymbol\eta, \bz)$, where
\begin{equation}\label{changeV}
	\boldsymbol\eta={\mathbf B\boldsymbol\xi}/\sqrt{\e}, 
\end{equation}
with
\[
\dot {\mathbf B} = - 2 \mathbf B\, \boldsymbol{\alpha}_R,  \qquad \mathbf B(0)=\sqrt{\boldsymbol{\alpha}_I(0)}\,,
\]
then $\boldsymbol{\alpha}_I = {\mathbf B}^T\, \mathbf B$ and 
\begin{equation}\label{weq}
w_{t}=\frac{i}{2}\, {\rm Tr}\left({\mathbf B}^T\, \nabla_{\boldsymbol\eta}^2 w\, {\mathbf B}\right) - 2 i\, \boldsymbol\eta^{T}(\mathbf {B}^T)^{-1}\,\boldsymbol{\alpha}_I^2\, \mathbf {B}^{-1}\boldsymbol\eta w+\frac{1}{i\e}\, U_{r}w, 
\end{equation}
Note that in the $\boldsymbol\eta$ variable, 
\[
\frac{1}{i\e}U_{r}=O(\sqrt \e),
\]
so the $w$ equation (\ref{weq}) is not oscillatory in $\boldsymbol\eta$ nor in $t$. Furthermore, if one drops those $O(\sqrt{\e})$
terms, one expects to recover the leading order Gaussian beam method \cite{GWPT2}.

In our numerical tests, we will only consider the initial data $\psi$ given by a Gaussian wave packet, i.e.
\begin{equation}\label{initial_GWP}
\psi(0,\mathbf x,\bz)=\exp \left(i\left(\boldsymbol\xi^{T}
\boldsymbol{\alpha}\, \boldsymbol\xi + {\mathbf p}^{T}\boldsymbol\xi +\gamma\right)/\e\right),
\end{equation}
More general initial conditions, (with a numerical support proportional to $\sqrt{\varepsilon}$) can  be approximated with the desired accuracy as a superposition of relatively small number of Gaussian wave packets. See \cite[Section 2.6]{GWPT} for more details. 
{\color{black}Another point of view to shed some light on the GWPT formulation is the following: 
Direct solution of the Schroedinger equation for a modulated wave-packet requires a lot of grid points, in most of which 
the wave function is almost zero, while the change of variable allows to work with a fixed computational domain of length $O(1)$, where 
the non oscillatory transformed wave function $w$ can be resolved with a relatively small number of grid points. }

\subsection{Regularity of $w$ in the $\bz$ variable}
\label{Reg-W}

It is well understood that the $w$ equation no longer propagates $\e$-dependent oscillations in space or in time. We show in the following that the $w$ equation is not oscillatory in the $\bz$ variable either.
In this section, we assume the spatial variable $x,\,\eta$ is one dimensional to simplify the analysis. Now the $w$ equation (\ref{weq}) reduces to 
\begin{equation}
\label{w-eq1}
w_t = \frac{i}{2} \alpha_I  w_{\eta \eta} - 2 i \alpha_I \eta^2 w+ \frac{U_r}{i \e} w, 
\end{equation}
where
\[
U_r (t,\eta,\mathbf z)= V(q+\sqrt{\e}  B^{-1} \eta, \mathbf z ) - V(q,\mathbf z)-\sqrt{\e}B^{-1}\eta\, V_x(q,\mathbf  z)- \frac{1}{2}\bigl (\sqrt{\e}B^{-1}\eta\bigr)^2\, V_{xx}(q,\mathbf z), 
\]
and $B=\sqrt{\alpha_I}$\,. Observe that $w=w(t,\eta;\mathbf z)$ is a function of independent variables $(t,\eta)$, but it obtains the dependence of the parameter $\mathbf z$ through the coefficients. We emphasize that, although the change of variable \eqref{changeV} is $\mathbf z$ dependent, in the $w$ equation, $\eta$ and $\mathbf z$ are independent variables. This is due to the fact that it is $w=w(t,\eta;\mathbf z)$, not 
$w=w(t,\eta(\mathbf z);\mathbf z)$. 
With the random inputs, the Gaussian wave packet transform is straightforward for all $\bz$. 
Besides the time dependence, the Gaussian wave packet parameters $q$, $p$, $\alpha$, $B$ and $\gamma$ also depend on $\mathbf z$. 
The smooth components $w$ and $U_r$ depend on $\bf z$. Then, it is not yet clear whether $w$ has $\e$ dependent $z$ derivatives.

We make the assumption that the potential is infinitely smooth with bounded derivatives in both $x$ and $\mathbf z$, namely, for $m\in \N$ and $\mathbf k=(k_1,k_2,\cdots,k_n) \in \N^n$, there exists a constant $C_{m,\mathbf k}$ such that,
\begin{equation}\label{assump1}
|\partial_x^m \partial_{\mathbf z}^{\mathbf  k}V|\le C_{m,\mathbf k}, 
\end{equation}
where $\partial_{\mathbf z}^{\mathbf  k}=\partial_{z_1}^{k_1}\cdots \partial_{z_n}^{k_n}$. 
We also assume that the $w$ equation (\ref{w-eq1}) is equipped with an initial condition, $w(0,\eta,\mathbf z)=w_{\text{in}}(\eta,\mathbf z)$, which has a $O(1)$ sized support, 
satisfying {the following assumption:} for $m\in \N$ and  $\mathbf k=(k_1,k_2,\cdots,k_n) \in \N^n$, there exists an $\e$-independent constant $C_{m,\mathbf k}$ such that, 
\begin{equation}\label{assump2}
\| \partial_\eta^m\partial_{\mathbf z}^{\mathbf  k} w_{\text{in}}\|_{\Gamma}\le C_{m,\mathbf k}. 
\end{equation}

Our goal is to show that the smoothness in the $\bz$ variable will be preserved in time. 
Before calculating the regularity of $w$ in the $\bz$ variable, we first show Lemma \ref{lemma:parameters}
and Lemma \ref{Newlemma:Ur}, which will be used for the main result of this section namely Theorem \ref{w_z}.

\begin{lemma}
\label{lemma:parameters}
Assume boundedness condition \eqref{assump1}, and initial data for wave packet parameters satisfying the following: there exists an $\eps$ independent constant $C_{\mathbf K}$, such that
\[
|\partial_{\mathbf z}^{\mathbf  k} q(0)| \le C_{\mathbf K}, \quad |\partial_{\mathbf z}^{\mathbf  k} p(0)| \le C_{\mathbf K}, \quad |\partial_{\mathbf z}^{\mathbf  k} \alpha(0) | \le C_{\mathbf K}.
\]
Then,  there exists an $\eps$ independent constant $C_{T,\mathbf K}$, such that for $t\in[0,T]$,
\[
|\partial_{\mathbf z}^{\mathbf  k} q(t)| \le C_{T,\mathbf K}, \quad |\partial_{\mathbf z}^{\mathbf  k} p(t)| \le C_{T,\mathbf K}, \quad |\partial_{\mathbf z}^{\mathbf  k} \alpha(t) | \le C_{T,\mathbf K}.
\]
\end{lemma}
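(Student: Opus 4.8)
The plan is to view \eqref{psys} as an ODE system whose right-hand side is smooth in the unknowns $(q,p,\alpha)$ and in $\bz$ — with all $x$- and $\bz$-derivatives of $V$ bounded by \eqref{assump1} — and to bound the $\bz$-derivatives of the solution by differentiating the ODEs $|\mathbf k|$ times in $\bz$ and closing the resulting linear systems with Gronwall's inequality, inducting on $|\mathbf k|$. A decisive structural remark is that $\e$ enters \eqref{psys} only through the $i\e\,\mathrm{Tr}(\alpha_R)$ term in the $\gamma$-equation, which plays no role in the statement; the $(q,p,\alpha)$ subsystem is \emph{entirely $\e$-free}, so any time-uniform bound obtained on $[0,T]$ is automatically $\e$-independent. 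Since the present section works in one spatial dimension, $q,p,\alpha$ are scalars; the general case is identical with matrix Wronskians replacing scalar ones.

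\emph{Step 1 (the $(q,p)$ subsystem).} Boundedness of $V_q$ from \eqref{assump1} gives $|\dot p|\le C$, hence $p$ grows at most linearly and $q$ at most quadratically; so $q,p$ exist on $[0,T]$ and are bounded there by an $\e$-independent constant. Applying $\partial_{\bz}^{\mathbf k}$ to $\dot q=p$ and $\dot p=-V_q(q(t,\bz))$ yields
\begin{equation}
\frac{d}{dt}\,\partial_{\bz}^{\mathbf k}q=\partial_{\bz}^{\mathbf k}p,\qquad
\frac{d}{dt}\,\partial_{\bz}^{\mathbf k}p=-V_{qq}(q)\,\partial_{\bz}^{\mathbf k}q+R_{\mathbf k}(t,\bz),
\end{equation}
where, by the Fa\`a di Bruno formula, $R_{\mathbf k}$ is a finite sum of products of derivatives of $V$ evaluated at $q$ (bounded by \eqref{assump1}) with factors $\partial_{\bz}^{\mathbf j}q$, $|\mathbf j|<|\mathbf k|$, which are already bounded by the inductive hypothesis. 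The displayed system is thus linear in $(\partial_{\bz}^{\mathbf k}q,\partial_{\bz}^{\mathbf k}p)$ with bounded coefficients and bounded forcing, and Gronwall's inequality together with the hypothesis at $t=0$ gives the asserted $\e$-independent bound on $[0,T]$.

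\emph{Step 2 (the Riccati equation for $\alpha$).} I would linearize: let $(Q,P)$ solve the variational equations $\dot Q=P$, $\dot P=-V_{qq}(q(t,\bz))Q$ with $Q(0)=1$, $P(0)=2\alpha(0,\bz)$; a direct computation shows $\alpha=\frac12 PQ^{-1}$ solves $\dot\alpha=-2\alpha^2-\frac12 V_{qq}(q)$ with the right initial datum. Since $V_{qq}(q(t,\bz))$ is bounded on $[0,T]$ by Step 1, $(Q,P)$ solves a \emph{linear} ODE with bounded coefficients, so $Q,P$ and — by the same $\partial_{\bz}^{\mathbf k}$/Gronwall induction as in Step 1 — all their $\bz$-derivatives are bounded on $[0,T]$ uniformly in $\e$. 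The only substantial point is a lower bound on $|Q(t)|$: writing $Q=Q_1+iQ_2$, $P=P_1+iP_2$, each of $(Q_1,P_1)$, $(Q_2,P_2)$ solves the real equation $\ddot x=-V_{qq}(q(t))x$, so the Wronskian $Q_1\dot Q_2-Q_2\dot Q_1$ is constant, equal to $2\alpha_I(0,\bz)$. Under the standing non-degeneracy assumption $\alpha_I(0,\bz)\ge c_0>0$ (needed for $B=\sqrt{\alpha_I}$ in \eqref{changeV} to make sense), this constant, together with the uniform upper bound on $|P|=|\dot Q|$, forces a uniform-in-$(t,\bz)$ lower bound on $|Q(t)|$; combined with the upper bound on $|Q|$ we get $|Q(t)^{-1}|\le C_T$. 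Differentiating via $\partial_{\bz}(Q^{-1})=-Q^{-1}(\partial_{\bz}Q)Q^{-1}$ and Leibniz, the bounds of this step give $\e$-independent bounds on $\partial_{\bz}^{\mathbf k}(Q^{-1})$, hence on $\partial_{\bz}^{\mathbf k}\alpha=\frac12\,\partial_{\bz}^{\mathbf k}(PQ^{-1})$. (Alternatively, granting from \cite{GWPT,GWPT2} the standard fact that the Riccati solution stays bounded with $\alpha_I$ bounded away from zero on $[0,T]$, one may skip the linearization and apply the Step~1 induction directly to $\dot\alpha=-2\alpha^2-\frac12 V_{qq}(q)$, whose right-hand side is then smooth in $\alpha$ and $q$ with bounded derivatives.)

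\emph{Main obstacle.} The only genuinely analytic ingredient is the global-in-$[0,T]$ invertibility of $Q$ with an $\e$- and $\bz$-uniform bound on $|Q^{-1}|$ — equivalently, the non-blowup of the Riccati solution and the positive lower bound on $\alpha_I$. Everything else is bookkeeping: Gronwall estimates for linear ODEs plus the multivariate chain rule, with $\e$-independence coming for free since $\e$ never appears in the equations for $q$, $p$, $\alpha$.
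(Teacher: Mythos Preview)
Your proposal is correct and is precisely the kind of ``standard estimation of the ODE system'' the paper alludes to before omitting the proof entirely; in particular your observation that the $(q,p,\alpha)$ subsystem is $\e$-free, the Fa\`a di Bruno/Gronwall induction for $(q,p)$, and the Wronskian lower bound for $|Q|$ in the Riccati linearization are exactly the ingredients one expects here. Since the paper gives no argument at all, there is nothing further to compare.
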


The proof follows standard estimations of the ODE system of the parameters, which we shall omit here. We also remark that we have only listed the parameters needed for showing the regularity properties of the $w$ equation (\ref{w-eq1}), but this argument clearly works for other wave packets parameters as well.
\begin{lemma}
\label{Newlemma:Ur}
With the boundedness assumptions \eqref{assump1}, for all $\mathbf k=(k_1,k_2,\cdots,k_n) \in \N^n$, it is 
\[ 
\partial_{\bz}^{\bk}U_r=O(\e^{\frac 3 2}). 
\]
\end{lemma}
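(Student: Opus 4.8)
The plan is to prove Lemma \ref{Newlemma:Ur} by combining the Taylor-remainder structure of $U_r$ with the parameter bounds from Lemma \ref{lemma:parameters}. Recall that
\[
U_r(t,\eta,\mathbf z)= V(q+\sqrt{\e}\,B^{-1}\eta,\mathbf z) - V(q,\mathbf z)-\sqrt{\e}\,B^{-1}\eta\, V_x(q,\mathbf z)-\tfrac12\bigl(\sqrt{\e}\,B^{-1}\eta\bigr)^2 V_{xx}(q,\mathbf z),
\]
so $U_r$ is precisely the second-order Taylor remainder of $x\mapsto V(x,\mathbf z)$ expanded about $x=q$, evaluated at the increment $h:=\sqrt{\e}\,B^{-1}\eta$. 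The integral form of the remainder gives
\[
U_r=\frac{h^3}{2}\int_0^1 (1-s)^2\, V_{xxx}\bigl(q+s h,\mathbf z\bigr)\,ds,
\]
which already exhibits the $h^3=O(\e^{3/2})$ prefactor. So for the $\mathbf k=0$ case the claim is immediate from \eqref{assump1} (boundedness of $V_{xxx}$) together with the boundedness of $q$, $B^{-1}$ (hence of $h$ on the $O(1)$ support of $w$, which controls $\eta$); note $B=\sqrt{\alpha_I}$ and one needs $\alpha_I$ bounded away from $0$, which is part of the standard GWPT hypotheses/Lemma \ref{lemma:parameters}.

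For general $\mathbf k$, the idea is to differentiate the integral representation $\partial_{\bz}^{\bk}U_r$. The $\sqrt\e$ and $\eta$ factors in $h^3$ carry no $\bz$ dependence, so $\partial_{\bz}^{\bk}$ only hits the remaining factor $(B^{-1})^3\int_0^1(1-s)^2 V_{xxx}(q+sh,\mathbf z)\,ds$. By the Leibniz and Fa\`a di Bruno rules, every resulting term is a finite sum of products of: (i) $\bz$-derivatives of $B^{-1}$ (equivalently of $\alpha_I$, through $\alpha$), which are $\e$-independently bounded by Lemma \ref{lemma:parameters} plus the chain rule applied to $B=\sqrt{\alpha_I}$ and the boundedness of $\alpha_I^{-1}$; (ii) $\bz$-derivatives of $q$ and of $h$ (again $\e$-independently bounded, with the $h$-derivatives additionally carrying a $\sqrt\e$); and (iii) mixed derivatives $\partial_x^{j}\partial_{\bz}^{\bk'}V_{xxx}$ evaluated at $(q+sh,\mathbf z)$, which are bounded by \eqref{assump1}. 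Since every term retains the overall $h^3$, i.e.\ the explicit $\e^{3/2}$, and all other factors are bounded uniformly in $\e$ for $t\in[0,T]$, one concludes $\partial_{\bz}^{\bk}U_r=O(\e^{3/2})$. It is cleanest to state this as: $U_r=\e^{3/2}\,R(t,\eta,\mathbf z)$ with $R$ and all its $\bz$-derivatives bounded uniformly on $[0,T]$ on the support of $w$.

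The main obstacle, such as it is, is bookkeeping rather than a genuine difficulty: one must verify that the chain-rule expansion of $\partial_{\bz}^{\bk}\bigl[(B^{-1})^3 V_{xxx}(q+sh,\cdot)\bigr]$ never generates a negative power of $\e$. The only place $\e$ enters the $\bz$-differentiated factors is through $h=\sqrt\e\,B^{-1}\eta$, and differentiating $h$ in $\bz$ produces more $\sqrt\e$'s, not fewer — so powers of $\e$ can only accumulate, never cancel. One should also be slightly careful that the argument $q+sh$ stays in a region where the bounds \eqref{assump1} apply uniformly; since $q$ is bounded (Lemma \ref{lemma:parameters}) and $h=O(\sqrt\e)$ on the bounded $\eta$-support, the argument lies in a fixed compact set for $\e$ small, so this is harmless. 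No other step requires more than the already-granted Lemma \ref{lemma:parameters} and assumption \eqref{assump1}.
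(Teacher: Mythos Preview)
Your proof is correct and takes a somewhat different route from the paper's. The paper works with the difference form of the Taylor remainder: it introduces auxiliary quantities $T_{m,\mathbf k}$ (the second-order remainders of $\partial_x^m\partial_{\bz}^{\bk}V$ about $q$), notes each is $O(\e^{3/2})$, then explicitly expands $\partial_{z_1}U_r$ term by term and regroups the many resulting pieces into recognizable Taylor-remainder blocks multiplied by bounded parameter derivatives, finishing by induction on $|\mathbf k|$. You instead use the integral form of the remainder to write $U_r=\e^{3/2}R(t,\eta,\bz)$ with $R$ manifestly bounded, and then differentiate $R$ in $\bz$; the key observation that every inner $\bz$-derivative hitting $h=\sqrt\e\,B^{-1}\eta$ contributes an additional $\sqrt\e$ (never a negative power) makes the bookkeeping trivial. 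Your route is shorter and more transparent, avoiding the regrouping step entirely; the paper's route is slightly more elementary in that it never writes down the integral remainder, but pays for this with a longer computation. You also make explicit the auxiliary hypothesis that $\alpha_I$ stays bounded away from zero (so $B^{-1}$ is controlled), which the paper uses but leaves implicit.
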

\begin{proof}
Recall that 
\begin{equation}
\label{Ur}
U_r (t,\eta,\mathbf z)= V(q+\sqrt{\e}  B^{-1} \eta, \mathbf z ) - V(q,\mathbf z)-\sqrt{\e}B^{-1}\eta\, V_x(q,\mathbf  z)- \frac{1}{2}\bigl (\sqrt{\e}B^{-1}\eta\bigr)^2\, V_{xx}(q,\mathbf z).
\end{equation}
We observe that, assumption \eqref{assump1} together with the Taylor's Theorem implies, for $m\in \N$ and $\mathbf k=(k_1,k_2,\cdots,k_n) \in \N^n$,
\begin{multline} \label{est:T}
T_{m,\mathbf k}:=\partial_x^m \partial_{\mathbf z}^{\mathbf  k}V(q+\sqrt{\e}  B^{-1} \eta, \mathbf z )-\partial_x^m \partial_{\mathbf z}^{\mathbf  k}V(q, \mathbf z )- \sqrt{\e}  B^{-1} \eta \partial_x^{m+1} \partial_{\mathbf z}^{\mathbf  k}V(q, \mathbf z )\\
- \frac{1}{2} \bigl (\sqrt{\e}B^{-1}\eta  \bigr)^2 \partial_x^{m+2} \partial_{\mathbf z}^{\mathbf  k}V(q, \mathbf z )=O(\e^{\frac 3 2}).
\end{multline}
Thus, it is clear that $|U_r| = O(\e^{\frac 3 2})$.

Next, we examine the first order derivative in $z_1$. By direct calculation, 
\begin{align*}
\partial_{z_1}U_r &= \partial_{z_1}V(q+\sqrt{\e}B^{-1} \eta, \bz)  + \partial_{x}V(q+\sqrt{\e}B^{-1}\eta, \bz)\partial_{z_1}(q+\sqrt{\e}B^{-1}\eta) \\ 
&\quad - (\partial_{z_1}V(q, z) + \partial_{x}V(q, \bz)\partial_{z_1}q) \\
&\quad - \left(\partial_{z_1}( \sqrt{\e} B^{-1}\eta)\partial_x V(q, \bz) +  \sqrt{\e}B^{-1}\eta\, \partial_{x z_1}V(q, \bz)+ \sqrt{\e}B^{-1}\eta\, \partial_{xx}V(q, \bz) \partial_{z_1}q \right) \\ 
&\quad - \frac{1}{2}\e\left( \partial_{z_1}(B^{-1}\eta)^2 \partial_{xx}V(q, \bz) + (B^{-1}\eta)^2 \partial_{xx z_1}V(q, \bz)+ (B^{-1}\eta)^2 \partial_{xx x}V(q, \bz) \partial_{z_1} q\right) \\
& =\partial_{z_1}V(q+\sqrt{\e}B^{-1} \eta, \bz)  - \partial_{z_1}V(q, z) \\
& \quad - \sqrt{\e}B^{-1}\eta\, \partial_{x z_1}V(q, \bz) - \frac 1 2 (B^{-1}\eta)^2 \partial_{xx z_1}V(q, \bz) \\
& \quad + \partial_{x}V(q+\sqrt{\e}B^{-1}\eta, \bz)\partial_{z_1}q -\partial_{x}V(q, \bz)\partial_{z_1}q \\
& \quad - \sqrt{\e}B^{-1}\eta\, \partial_{xx}V(q, \bz) \partial_{z_1}q - \frac 1 2  (B^{-1}\eta)^2 \partial_{xx x}V(q, \bz) \partial_{z_1} q \\
& \quad +  \left( \partial_{x}V(q+\sqrt{\e}B^{-1}\eta, \bz)-  \partial_x V(q, \bz) - \sqrt{\e}B^{-1}\eta\, \partial_{xx}V(q, \bz)\right) \partial_{z_1}(\sqrt{\e}B^{-1}\eta). 
\end{align*}
Thus, by \eqref{est:T} and Lemma \ref{lemma:parameters}, we conclude that 
$\partial_{z_1}U_r =O(\e^{\frac 3 2})$. 

By induction, one easily sees that $\partial_{\mathbf z}^{\mathbf  k} U_r $ is a summation of products of $T_{m,\mathbf k}$
(add up with some other terms). Hence, by \eqref{est:T} and Lemma \ref{lemma:parameters}, it can be concluded that $\partial_{\mathbf z}^{\mathbf  k} U_r =O(\e^{\frac 3 2})$, 
and the lemma follows.
\end{proof}

We give the definition of the averaged norm of $w=w(t, \eta, \bz)$, 
$$ ||w||_{\mathcal T}^2:=  \langle \| w \|_{L^2(\eta)}^2 \rangle_{\pi(\bz)},  $$
which is analogous to the $\Gamma$--norm defined in (\ref{energy}) for $\psi$ while using $\eta$ variable in the $L^2(\eta)$ norm here.
We now present the main theorem of this section:

\begin{theorem}\label{w_z}
With the boundedness assumptions \eqref{assump1} and conditions on the initial data 
\eqref{assump2}, the $w$ equation (\ref{w-eq1}) preserves the regularity in the following sense: for a fixed $T>0$, $\mathbf k=(k_1,k_2,\cdots,k_n) \in \N^n$,  there exists an $\e$-independent constant  $M_{T,\mathbf k}$, such that for $0\le t \le T$,
\begin{equation}\label{reg_wz}
 || \partial_{\bz}^{\bk}w||_{\mathcal T}\le M_{T,\mathbf k}\,. 
\end{equation}
\end{theorem}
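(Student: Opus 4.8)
The plan is to prove the estimate \eqref{reg_wz} by an energy method applied to the equation satisfied by $\partial_{\bz}^{\bk} w$, proceeding by induction on $|\bk|$. First I would differentiate the $w$ equation \eqref{w-eq1} $\bk$ times in $\bz$. Because the coefficients $\alpha_I(t,\bz)$ and the effective potential $U_r/(i\e)$ depend on $\bz$, the Leibniz rule produces, besides the ``diagonal'' terms in which all $\bz$-derivatives hit $w$ itself, a collection of commutator-type source terms in which some derivatives fall on the coefficients. Schematically,
\begin{equation*}
\partial_t \partial_{\bz}^{\bk} w = \frac{i}{2}\alpha_I \,\partial_\eta^2 \partial_{\bz}^{\bk} w - 2 i \alpha_I \eta^2 \partial_{\bz}^{\bk} w + \frac{1}{i\e} U_r \,\partial_{\bz}^{\bk} w + \mathcal{R}_{\bk},
\end{equation*}
where $\mathcal{R}_{\bk}$ collects all the terms with at least one $\bz$-derivative on a coefficient. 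I would then compute $\frac{d}{dt}\|\partial_{\bz}^{\bk} w\|_{\mathcal T}^2$: the first term is skew-adjoint in $L^2(\eta)$ after integration by parts (using that $\alpha_I$ is real and $\eta$-independent), so it contributes nothing; the second term is purely imaginary times a real quantity and also drops; and crucially the potential term $\frac{1}{i\e}U_r\partial_{\bz}^{\bk} w$ is again purely imaginary and cancels in $\frac{d}{dt}\|\cdot\|^2$. What survives is $2\,\re\langle \mathcal{R}_{\bk}, \partial_{\bz}^{\bk} w\rangle_{\mathcal T} \le 2\|\mathcal{R}_{\bk}\|_{\mathcal T}\,\|\partial_{\bz}^{\bk} w\|_{\mathcal T}$ by Cauchy--Schwarz (exactly as in the $\Gamma$-norm computation of Section 2), yielding $\frac{d}{dt}\|\partial_{\bz}^{\bk} w\|_{\mathcal T} \le \|\mathcal{R}_{\bk}\|_{\mathcal T}$.

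The heart of the argument is then to control $\|\mathcal{R}_{\bk}\|_{\mathcal T}$ by an $\e$-independent constant times lower-order quantities, so that Gr\"onwall's inequality closes the induction. The terms in $\mathcal{R}_{\bk}$ coming from differentiating $\alpha_I$ and the quadratic-in-$\eta$ coefficient are harmless: Lemma \ref{lemma:parameters} guarantees that all $\bz$-derivatives of $\alpha$ (hence of $\alpha_I$ and $B^{-1}$) are bounded uniformly in $\e$, and these terms involve only $\partial_{\bz}^{\bl} w$ with $|\bl|<|\bk|$ together with $\partial_\eta^2 \partial_{\bz}^{\bl}w$ and $\eta^2 \partial_{\bz}^{\bl}w$ — so they are bounded by the induction hypothesis provided one simultaneously propagates bounds on the weighted/higher-$\eta$-derivative norms $\|\eta^a \partial_\eta^b \partial_{\bz}^{\bl}w\|_{\mathcal T}$. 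The genuinely delicate term is the one with the $1/\e$ prefactor: $\frac{1}{i\e}\sum_{0\ne \bl\le \bk}\binom{\bk}{\bl}\,\partial_{\bz}^{\bl}U_r\;\partial_{\bz}^{\bk-\bl}w$. Here the $1/\e$ looks dangerous, but Lemma \ref{Newlemma:Ur} tells us precisely that $\partial_{\bz}^{\bl}U_r = O(\e^{3/2})$ for every $\bl$, so $\frac{1}{\e}\partial_{\bz}^{\bl}U_r = O(\e^{1/2})$ and this term is in fact uniformly bounded (indeed small) in $\e$. This is the point where the whole GWPT construction pays off.

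Consequently I would set up the induction to simultaneously estimate the family of norms $N_{a,b,\bk}(t):=\|\eta^a \partial_\eta^b \partial_{\bz}^{\bk} w\|_{\mathcal T}$, ordered by $|\bk|$ (and within fixed $|\bk|$, by some ordering on $(a,b)$), since the energy estimate for $\partial_{\bz}^{\bk}w$ calls upon $\partial_\eta^2$ and $\eta^2$ acting on the same and lower $\bz$-orders, and differentiating the $w$ equation in $\eta$ likewise reproduces the same structure with controlled commutators (here one also needs that applying $\partial_\eta$ to $U_r/(i\e)$ gains a factor $\sqrt\e$, which follows from the definition of $U_r$ and Taylor's theorem exactly as in Lemma \ref{Newlemma:Ur}, and applying $\eta$-multiplication is compatible because $\alpha_I$ is $\eta$-independent). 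Each such norm then satisfies a differential inequality of the form $\frac{d}{dt}N_{a,b,\bk} \le C(1 + \text{finitely many }N_{a',b',\bk'}\text{ with }(a',b',\bk')\text{ earlier in the order})$ with $C$ independent of $\e$, and a straightforward Gr\"onwall argument on $[0,T]$, using the $\e$-independent initial bounds \eqref{assump2}, yields \eqref{reg_wz}. The main obstacle is purely bookkeeping: verifying that the Leibniz expansion never produces a term with a net negative power of $\e$ once Lemma \ref{Newlemma:Ur} (and its $\eta$-derivative analogue) is invoked, and that the hierarchy of auxiliary norms $N_{a,b,\bk}$ genuinely closes rather than requiring ever-higher $\eta$-weights at each step — which it does because raising $|\bk|$ by one only ever asks for two more $\eta$-derivatives and two more powers of $\eta$ on strictly lower $\bz$-orders, a finite cascade for any fixed target $\bk$.
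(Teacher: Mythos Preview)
Your proposal is correct and follows essentially the same energy/Leibniz/Gr\"onwall argument as the paper. Your treatment is in fact somewhat more thorough: the paper writes out only the case $|\bk|=1$ in detail (first bounding $\|\partial_\eta^m w\|_{\mathcal T}$ for $m\le 3$ by a separate energy estimate, then invoking Lemma~\ref{Newlemma:Ur} and Gr\"onwall) and handles the general case with the phrase ``by induction,'' whereas you make the required hierarchy of mixed norms $\|\eta^a\partial_\eta^b\partial_{\bz}^{\bk}w\|_{\mathcal T}$ explicit.
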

\begin{proof}
The $w$ equation can be written as 
\begin{equation}\label{w_t} w_t = \frac{i}{2}\alpha_{I}w_{\eta\eta}- i\tilde U w, \end{equation}
where $$\tilde U = 2\alpha_{I}\eta^2 +\frac{U_r}{\e}, $$
and $U_r$ given in (\ref{Ur}). 
We first look at the $\mathcal T$-norm of $w$, 
\begin{align*}
         & \frac{d}{d t} ||w||_{\mathcal T}^2  = \int (w \bar w)_t\, d\eta \pi(\bz)d\bz = \int (w_t \bar w+ w \bar w_t)\, d\eta\pi(\bz)d\bz \\
         &\qquad\quad = \int \bigl(\frac{i}{2}\alpha_{I}w_{\eta\eta} - i\tilde U w)\bar w + w(-\frac{i}{2}\alpha_{I}\bar w_{\eta\eta} + 
          i \tilde U \bar w)\bigr)\, d\eta\pi(\bz)d\bz =0, 
\end{align*}
which implies the averaged norm of $w$ is preserved, $||w||_{\mathcal T}= ||w_{\text{in}}||_{\mathcal T}$. 

Differentiating (\ref{w_t}) with respect to $z_1$, by the chain rule, one has
$$\partial_t \partial_{z_1}w = \frac{i}{2}\bigl(\partial_{z_1}\alpha_{I}\, w_{\eta\eta} 
+ \alpha_{I}   w_{\eta\eta z_1}\bigr) 
- i \partial_{z_1}\tilde U w - i \tilde U  \partial_{z_1}w. $$
By direct calculation (omit the $\mathcal T$-subscript in the norm $||\cdot||_{\mathcal T}$ for notation simplicity), 
\begin{align*}
\frac{d}{dt} ||\partial_{z_1}w||^2
& = \int\left(\partial_t\partial_{z_1}w\, \partial_{z_1}\bar w + \partial_{z_1}w\, \partial_t\partial_{z_1}\bar w\right) d\eta\pi(\bz)d\bz \\ 
& = \int \left( \left[\frac{i}{2}\left(\partial_{z_1}\alpha_{I}\, w_{\eta\eta} + \alpha_{I}  w_{\eta\eta z_1}\right)
- i \partial_{z_1}\tilde U w - i\tilde U  \partial_{z_1}w\right] \partial_{z_1}\bar w \right.  \\
&\quad  \left. + \partial_{z_1}w \left[-\frac{i}{2}\bigl(\partial_{z_1}\alpha_{I}\, \bar w_{\eta\eta}+ \alpha_{I} \bar w_{\eta\eta z_1}\bigr) + i\partial_{z_1}\tilde U \bar w + i\tilde U \partial_{z_1}\bar w\bigr)\right]\right) d\eta\pi(\bz)d\bz \\
& = \int \left[\frac{i}{2}\partial_{z_1}\alpha_{I}(w_{\eta\eta}\, \partial_{z_1}\bar w - \partial_{z_1}w\, \bar w_{\eta\eta})  + i \partial_{z_1}\tilde U (-w\, \partial_{z_1}\bar w + \partial_{z_1}w\, \bar w)\right] d\eta\pi(\bz)d\bz \\
& \leq ||\partial_{z_1}w||\, ||\partial_{z_1}\alpha_{I}\, w_{\eta\eta\eta}||    + 2 ||\partial_{z_1}\tilde U w||\, ||\partial_{z_1}w|| , 
\end{align*}
where integration by parts and the Cauchy-Schwarz inequality are used. 
Thus
\begin{equation}\label{wz_1}\frac{d}{dt} ||\partial_{z_1}w|| \leq \frac{1}{2}||\partial_{z_1}\alpha_{I}\, w_{\eta\eta\eta}|| + ||\partial_{z_1}\tilde U w|| . 
 \end{equation}

Clearly, to prove the boundedness of $||\partial_{z_1}w||$, it suffices to show the boundedness of the right hand side of \eqref{wz_1}. 
Fortunately, the right hand side of \eqref{wz_1} does not involve the $\mathbf z$ derivative of $w$. 
The estimates of the $\eta$ derivatives of $w$ are standard, which can be carried out in the following deductive way.

We calculate that 
\begin{align}
\label{W_eta}
\begin{split}
 \frac{d}{d t} ||w_{\eta}||^2 &= \int \bigl((w_t)_{\eta} \bar w_{\eta}+ w_{\eta} (\bar w_t)_{\eta} \bigr) d\eta\pi(\bz)d\bz \\ 
                &= \int \bigl (-i (\tilde U w)_{\eta} \bar w_{\eta}+ i w_{\eta} (\tilde U \bar w)_{\eta} \bigr)  d\eta\pi(\bz)d\bz \\
                & \le \left(\int (\tilde U w)_{\eta}^2 \, \bar w_{\eta}^2 \, d\eta\pi(\bz)d\bz\right)^{1/2}
                \left((\tilde U \bar w)_{\eta}^2 \, w_{\eta}^2\, d\eta\pi(\bz)d\bz\right)^{1/2} \\
                & = 2 ||w_{\eta}||\, ||(\tilde U w)_{\eta}||. 
 \end{split}
\end{align}
Since $\partial_{\eta}\tilde U= 4 \alpha_{I}\eta + \partial_{\eta}U_r / \e =O(1)$, and by the chain rule, 
$$|| (\tilde U w)_{\eta}|| = || \tilde U_{\eta} w + \tilde U w_{\eta}|| 
\leq C_1 ||w|| + C_2 ||w_{\eta}||, $$
where $C_1, C_2>0$ are constants. Thus 
\begin{equation}\label{w_eta1}\frac{d}{d t}||w_{\eta}|| \leq C_1 ||w|| + C_2 ||w_{\eta}||. 
\end{equation}
For $t \in [0,T]$, the boundedness of $||w_{\eta}||$ follows from the Gr\"onwall's inequality and assumption \eqref{assump2}.
Similarly, we get
\begin{align*}
\frac{d}{d t} ||w_{\eta\eta}||^2 & = \int \bigl ((w_t)_{\eta\eta} \bar w_{\eta\eta}+ w_{\eta\eta} (\bar w_t)_{\eta\eta}\bigr) d\eta\pi(\bz)d\bz \\
&= \int \bigl (-i(\tilde U w)_{\eta\eta} \bar w_{\eta\eta}+ i w_{\eta\eta} (\tilde U \bar w)_{\eta\eta}\bigr) d\eta\pi(\bz)d\bz \\
& \le 2 ||w_{\eta\eta}||\, ||(\tilde U w)_{\eta\eta}||, 
\end{align*}
where the Cauchy-Schwarz inequality is used again just as in (\ref{W_eta}). 
By the chain rule, $\exists\, C_1,C_2, C_3 \in \R$ such that
\[
 ||(\tilde U w)_{\eta\eta}|| \le C_1 ||w|| + C_2 ||w_{\eta}|| + C_3 ||w_{\eta\eta}||, 
\]
thus
\begin{equation} \label{w_eta2}
\frac{d}{d t} ||w_{\eta\eta}|| \le C_1 ||w|| + C_2 ||w_{\eta}|| + C_3 ||w_{\eta\eta}||. 
\end{equation}
The boundedness of $||w_{\eta\eta}||$ follows from Gr\"onwall's inequality and assumption \eqref{assump2}, and similarly
for $||w_{\eta\eta\eta}||$. $||\alpha_{I}w_{\eta\eta}||$, $||\tilde U w_{\eta}||$ are also bounded. 
By Lemma \ref{Newlemma:Ur}, 
{\color{black}$||\partial_{z_1} \tilde U w|| \sim C \e^{\frac 1 2}||w||$.}
Using Gr\"onwall's inequality on (\ref{wz_1}), one gets $$||\partial_{z_1} w||\leq C_{T}, $$ 
where $C_{T}$ is a $O(1)$ constant, independent of $\e$. 
By induction, we obtain $$||\partial_{\bz}^{\bk}w||\leq C_{T}, $$
where $\partial_{\bz}^{\bk}=\partial_{z_1}^{k_1}\cdots \partial_{z_n}^{k_n}$. Therefore, we have shown Theorem \ref{w_z}. 
\end{proof}

\section{Quantum and classical uncertainty}
\label{sec:3.5}
In this section we briefly discuss about quantum and classical uncertainty, and about the comparison between quantum and classical systems, for small values of the rescaled Planck's constant. For simplicity, we first consider the case with one degree of freedom, $x\in \mathbb{R}$, and scalar random variable $z$. 

\subsection{Moments and expectations}

A quantum system is completely determined by its wave function $\psi$. For each
realisation of the random variable $z$, the quantum system is described by $\psi(x,t,z)$. 

The primary physical quantities of interest include the position density, 
$$ \rho(t,x,z) = |\psi(t,x,z)|^2, $$
and the current density, 
$$ j(t,x,z) = \varepsilon\, \text{Im}\left(\overline\psi(t,x,z)\nabla\psi(t,x,z)\right). $$

Some quantities of interest to look at are the mean and standard deviation in $z$. In this way we can define the means:
\begin{align}
   \mathbb{E}[\rho](x,t) & = \int\rho(x,t,z) \pi(z)\, dz, \quad \mathbb{E}[j](x,t) = \int j(x,t,z) \pi(z)\, dz
   \label{eq:E1}
\end{align}
and variance: 
\begin{align}
   \mathbb{Var}[\rho](x,t) & = \mathbb{E}[\rho^2]-\mathbb{E}[\rho]^2, \quad
   \mathbb{Var}[j](x,t) = \mathbb{E}[j^2]-\mathbb{E}[j]^2.
   \label{eq:V1}
\end{align}
The standard deviation will be computed as the square root of the variance: 
\[
	\mathbb{SD}[\rho] = \sqrt{\mathbb{Var}[\rho]}, \quad \mathbb{SD}[j] = \sqrt{\mathbb{Var}[j]}.
\]

For quantum systems we denote by $<h> = <\psi|h|\psi>$ the expectation value of observable $h$.
Such a quantity will in general be a function of time and $z$. 
For example 
\begin{align*}
	<q> & = <q>(t,z) = <\psi|\hat{q}|\psi> = \int \bar{\psi}(x,t,z)x\psi(x,t,z)\,dx,\\
		<p> & = <p>(t,z) = <\psi|\hat{p}|\psi> = - i\varepsilon \int \bar{\psi}(x,t,z)\psi_x(x,t,z)\,dx,
\end{align*}
where $\hat{q} = x\cdot$ and $\hat{p} = -i\varepsilon\frac{\partial}{\partial x}$ denote, respectively, the position and momentum operators when the wave function $\psi$ is in the space representation.
 
Because of the uncertainty in the parameter $z$, such quantities are random variables. It is possible to compute mean and variance of them as a function of time: 
\begin{align*}
	\mathbb{E}[<h>] &= \int <h>(t,z)\pi(z)\,dz,\\
	\mathbb{Var}[<h>] & = \mathbb{E}((<h>-\mathbb{E}(<h>))^2) = \int (<h>-\mathbb{E}[<h>])^2\pi(z)\,dz, 
\end{align*}
where $h$ denotes, for example, $q$ or $p$.

Notice that the average density $\mathbb{E}[\rho]$ and the average current $\mathbb{E}[j]$ can be used to compute an (ensemble) average particle position and momentum, since the two integration processes commute. 
However, the same is not true for the variance. As we shall see, it is possible to consider the classical limit of 
$\mathbb{Var}[<x>]$, while is it hard to define such a limit for $\mathbb{Var}[\rho]$.

\subsection{Classical limit}
\label{CL}
In classical mechanics, position and momentum of the particle follow Hamilton's equation
\[
	\dot{q} = \pad{H}{p}, \quad \dot{p} = - \pad{H}{q},
\]
subject to some initial condition $q(0,z) = q_0(z),\>p(0,z) = p_0(z)$. As in the quantum case, the uncertainty can be introduced at the level of the initial condition or at in the potential that defines the Hamiltonian:
\[
   H = \frac{p^2}{2m} + V(q,z),
\]
where the random parameter $z$ is distributed with a given density $\pi(z)$.

Position and momentum at a given time are therefore function of $z$ as well: $q = q(t,z), \> p = p(t,z)$.
If such a density is known, then the probability distribution function (pdf) of the coordinate $q$ can be found by classical techniques to find pdf of a function of a random variable. One which is commonly adopted in the physics community is given by
\begin{equation}
	\pdf_q(x,t) = \int \delta(x-q(t,z))\pi(z)\,dz,
	\label{classic_density}
\end{equation}
where $\delta$ denotes Dirac's delta, and the integral has to be interpreted in the usual distributional sense.
This representation can be interpreted as follows: for each realization or the random variable, 
a classical particle can be seen as a singular particle density
\[
	\rho_c(x,t,z) = \delta(x-q(t,z)).
\]
The probability distribution is then computed by weighting each value of the parameter with its probability density function, thus obtaining expression (\ref{classic_density}).

Assuming the function $q(t,z)$ is monotone in $z$, the integral can be easily computed by substitution, using the inverse function $z = z(t,q)$, yielding 
\[
	\pdf_q(x) = \pi(z(t,x)) |\partial z/\partial x|.
\]
A suitable generalisation is possible in the case $q(t,z)$ is not monotone:
\begin{equation}
	\label{pdf_q}
	\pdf_q(x) = \sum_{z:q(t,z) = x}\frac{\pi(z)}{|\partial q(t,z)/\partial z|}.
\end{equation}
Likewise, the mean current density distribution can be computed by smoothing the singular current corresponding to a single realization of the parameter $z$
\[
	 j_c(x,t,z) = \rho_c(x,t,z)p(t,z)
\]
by the pdf $\pi(z)$, obtaining
\[
	j_c(x,t) = \int \delta(x-q(t,z))p(t,z)\pi(z)\, dz.
\]
Using the same argument, such current distribution can be computed as
\[
	j_c(x,t) = \sum_{z:q(t,z) = x}\frac{\pi(z)}{|\partial q(t,z)/\partial z|}p(t,z).
\]

The situation with several degrees of freedom or with multivariate distribution is slightly different. 
Let us denote by $d$ the number of degrees of freedom, and by $m$ the number of random parameters. 
If $d=m$ then Equation (\ref{pdf_q}) is still valid by interpreting $|\partial q(t,z)/\partial z|$ as the Jacobian of the transformation between $z$ and $q$.
If $d>m$, then in general the pdf will be proportional to a Dirac mass on a manifold of dimension $d-m$. 
If $d<m$ then in general the pdf will still be a function, which can be computed by integration on a manifold of dimension $m-d$. As an example we mention here the case $d=1$, $m=2$. 
\begin{equation}
	\label{pdf_q2}
	\pdf_q(x) = \sum_{\Gamma:q(t,z) = x}\int_\Gamma\frac{\pi(z)}{|\nabla_zq(t,z)|}\, d\Gamma,
\end{equation}
where the sum is performed on all lines $\Gamma$ such that $q(t,z)=x$.

Sometimes one is not interested in the computation of the space distribution of the particle density or the current density, but just in some moments, such as the mean and the variance. They can be computed as 
\begin{align*}
	\mathbb{E}[q] & = \int q(t,z)\pi(z)\,dz,\quad
	 \mathbb{E}[p]  = \int p(t,z)\pi(z)\,dz,\\
	\mathbb{Var}[q] & = \mathbb{E}[q^2]-\mathbb{E}[q]^2, \quad 
	 \mathbb{Var}[p]  = \mathbb{E}[p^2]-\mathbb{E}[p]^2.
\end{align*}

\section{Numerical Simulations}
\label{sec:4}

\subsection{The stochastic collocation method}
\label{sec:4a}

We now briefly review the gPC method \cite{XK-02}.
In its stochastic Galerkin formulation, the gPC-SG approximation has been successfully applied to
many stochastic physical and engineering problems, see for instance an overview \cite{Xiu, LK}. 

On the other hand, the stochastic collocation (SC) method \cite{XH, GWZ-14} is known as a popular choice for complex systems with 
uncertainties when reliable, well-established deterministic solvers exist. 
It is non-intrusive, so it preserves all features of the deterministic scheme, and easy to parallelize \cite{Xiu, NTW-08}. 
The basic idea is as follows. Let $\{\bz_k\}_{k=1}^{N_z}\subset I_{\bz}$ be the set of collocation nodes, $N_z$ the number of samples.
For each fixed individual sample $\bz_k$, $k=1,\ldots,N_z$, 
one applies the deterministic solver to the deterministic equations as in \cite{GWPT}, 
obtains the solution ensemble for a general function $f(t,x,\bz)$, $f_k(t,x)=f(t,x,\bz_k)$, 
then adopts an interpolation approach to construct a gPC approximation
$$f(t,x,\bz)=\sum_{k=1}^{N_z}f_k(t,x)l_k(\bz),$$ where $l_k(\bz)$ depends on the construction method. 
The Lagrange interpolation is used here by choosing $l_k(\bz_i)=\delta_{ik}$. With samples $\{\bz_k\}$ and 
corresponding weights $\{\nu_k\}$ chosen from the quadrature rule, the integrals in $\bz$ are approximated by 
\begin{equation}\label{Int-z} \int_{I_{\bz}}f(t,x,\bz)\pi(\bz)d\bz \approx \sum_{k=1}^{N_z}f_k(t,x)\nu_k, \end{equation}
where $$\nu_k = \int_{I_{\bz}}l_k(\bz)\pi(\bz) d\bz. $$
{\color{black}
Note that in practice Lagrange interpolation is used here only in order to construct the weights of a quadrature formula, once the nodes are assigned. }

Considering the structure complexity of the deterministic solver developed in \cite{GWPT,GWPT2}, 
we choose the SC rather than gPC-SG in this project due to its 
simplicity and efficiency in implementation. In numerical simulation, we need to sample the Gaussian wave packet parameters and the $w$ function, while in solution construction, we are interested in the wave equation as well as physical observables which are nonlinear transforms of the wave equation. In this sense, the stochastic collocation method offers great flexibility in computing averages of various quantities in the random space.
We will introduce below how the SC and the GWPT method are combined in our numerical implementation. 

\subsection{Numerical implementation}

We first briefly review the meshing strategy for the GWPT based method in $x$ and $t$ when the random variables are not present. Recall that the GWPT maps the semi-classical Schr\"odinger  equation to the ODE system for the wave packet parameters and the $w$ equation. In numerical simulation, we denote the time step for the ODE system by $\Delta t_1$, the time step for the $w$ equation by $\Delta t_2$ and the spatial grid size for the $w$ equation by $\Delta \eta$.
We introduce a spatial mesh in $\bx$ with grid size $\Delta x$, in the final reconstruction step for $\psi$. 

Since the $w$ equation does not produce $O(\e)$ scaled oscillations, $\Delta t_2$ and $\Delta\eta$  can be chosen independently of $\e$. 
The phase term in the GWPT is computed by solving the ODE system of the Gaussian wave packet parameters, where the ODE system does not contain stiff terms due to $\e$. However, the numerical error in solving the phase term is magnified by a factor of $\e^{-1}$ when constructing the wave function, thus we often need to take $\Delta t_1$ to be $O(\e^{1/k})$, where $k$ denotes the accuracy order of the ODE solver. Finally, $\Delta x$ used in the reconstruction step also needs to be $O(\e)$ in order to resolve small oscillations in the wave function. 
The interested readers may refer to \cite{GWPT,GWPT2} for a more detailed discussion. 

We now discuss the sampling strategy for each step of the numerical implementation in the random space. 
Three sets of collocation points in $\bz$ are used in our numerical tests:  
\begin{itemize}
\item number of points to solve the ODEs for the wave packet parameters given by the following system (\ref{psys_z}): $N_{z,1}$, \\
At each collocation point ${\bz}_k$ ($k=1, \cdots, N_{z,1}$), we have
\begin{equation} 
\label{psys_z}
\left\{ \begin{array}{l}
\partial_t q(t,{\bz}_k) = p(t,{\bz}_k), 
\\[2pt]\partial_t p(t,{\bz}_k) =  -\nabla V(q,{\bz}_k), 
\\[2pt] \partial_t\alpha(t,{\bz}_k) = -2 (\alpha(t,{\bz}_k))^2 -\frac{1}{2}\nabla\nabla V(q,{\bz}_k), 
\\[2pt] \partial_t\gamma(t,{\bz}_k) =\frac{1}{2}(p(t,{\bz}_k))^2 -V(q,{\bz}_k)+i\e\, {\rm Tr}(\alpha_{R}(t,{\bz}_k)). 
\end{array}\right.
\end{equation}
\item number of points to solve the $w$ equation (\ref{w_t}): $N_{z,2}$,  
\item number of points to reconstruct $\psi$: $N_{z,3}$.  
\end{itemize}
The sets of mesh points $N_{z,1}$, $N_{z,2}$ and $N_{z.3}$ used above are denoted by $M_{1}$, $M_{2}$ and $M_{3}$ respectively. 
The cardinality of the set $M_j$ is $N_{z,j}$ ($j=1,2,3$). {\color{black} How we choose these collocation points depends on the distribution of $\bz$. The correspondence between the type of polynomial chaos and their underlying random variables can be found in \cite{Xiu}. 
In our numerical tests, if $\bz$ is uniformly distributed on $[-1,1]$, the Legendre-Gauss quadrature nodes and weights are used; 
if $\bz$ follows the Gaussian distribution, the Gauss-Hermite quadrature rule is applied. }

{\color{black} We solve the ODE system (\ref{psys_z}) by using the fourth-order Runge-Kutta method.}
After computing the wave packet parameters in (\ref{psys_z}) at the mesh points $M_{1}$ in the $z$ direction, 
cubic spline interpolation is used to get the values of these parameters at the mesh points $M_{2}$, 
which prepares us to update $w$ by solving (\ref{w_t}) in time at the same mesh points of $z$, i.e., $M_{2}$. 
In the reconstruction step, for each ${\bz}_k$, 
\begin{equation} \label{eq:reconstruct}
\psi(x, t, {\bz}_k)=\widetilde w(\xi, t, {\bz}_k)\exp\left(i\, (\xi^{T} \alpha_R\, \xi +p^{T}\xi+\gamma)/\e \right),
\end{equation}
with $\xi=x-q$ and ${\bz}_k\in M_3$. Cubic spline interpolation is used to obtain values of wave packet parameters from $M_{1}$ to $M_{3}$, and 
values of $w$ from $M_{2}$ to $M_{3}$, which is the reconstruction mesh points. 
Now we have the values of $\psi=\psi(x, t, {\bz}_k)$ at mesh points $\{{\bz}_k\}$, for $k=1, \cdots, N_{z, 3}$. 
Finally, in order to plot the solution $\psi$ and its physical quantities of interest at a set of fixed physical location, denoted by $X_0$, 
cubic spline interpolation is used again. 
Note that in practice one does not need to reconstruct $\psi$: the observables, such as
the expectation values for position and momentum, can be computed directly from $w$. See Section 2.4 in \cite{GWPT}. 

We now discuss the sampling strategy in $\bz$ for the GWPT method. Although the parameter system 
\eqref{psys_z} does not have $\e$ oscillations in $\bz$, the numerical error in the parameters are magnified by $\e^{-1}$ when reconstructing the wave function as in \eqref{eq:reconstruct}. Therefore, one expects that $N_{z,1}$ should depend on $\e$
in order to obtain accurate approximation of the wave function, and the dependence is related to the ODE solver used for the parameter system. 
Due to the regularity property of the $w$ equation in $\bz$, we expect that we can take $\e$ independent numbers of collocation points, and thus it suffices to take $N_{z,2}=O(1)$. Clearly, we need to take $N_{z,3}=O(\e^{-1})$ to resolve the oscillation in $\bz$ in the wave function reconstruction. 

We remark that, although in the sampling strategy in $z$ we require $N_{z,1}$ and $N_{z,3}$ to be $\e$ dependent, it does not cause much computational burdens, because the $N_{z,1}$ collocation points are only used to the ODE system and the $N_{z,3}$ collocation points are used in the final step. On the other hand, {\it in the most expensive part, solving the $w$ equation in time, we only use $\e$ independent numbers of collocation points. Hence, such sampling strategy is desired for the sake of computational efficiency.}

Finally, considering  that certain physical observables, such as position density and flux density can be obtained directly from the $w$ function, then $N_{z,1}$ is expected to be independent of $\e$ if one is only interested to capture the correct physical observables. 
This argument will be verified numerically in the tests of Section \ref{sec:numtest}. 

\subsubsection{Numerical observables}

With the obtained $\psi(t,X,{\bz}_k)$ and the corresponding weights $\{\nu_k\}$ for $k=1, \cdots, N_{z, 3}$, 
chosen from the quadrature rule, one can approximate the integral given by (\ref{energy}), 
\begin{equation}\label{E_approx} ||\psi||_{\Gamma}^2 =\int_{I_{\bz}}\int \left|\psi(t, \mathbf x, \bz)\right|^2 d{\mathbf x}\pi(\bz)d\bz 
\approx \sum_{k=1}^{N_{{z}, 3}}\sum_{i=1}^{N_x} |\psi(x_i, {\bz}_k)|^2\Delta x\, \nu_k. 
\end{equation}

To be consistent with the $\Gamma$-norm we defined in (\ref{energy}), 
we first denote the $L^1(x)$ norm of $j(t,x,\bz_k)$ by $\widetilde j (t,\bz_k)$ for $k=1, \cdots N_{z,3}$, 
\[
   \widetilde j (t,\bz_k) = \e \int \text{Im} \left( \overline\psi(t, x, \bz_k)\nabla\psi(t, x, \bz_k)\right) dx \approx
\sum_{i=1}^{N_x} \e \left( \overline\psi(t, x_i, \bz_k)\nabla\psi(t, x_i, \bz_k)\right) \Delta x,
\]
then get $\mathbb {E}(\widetilde j)$, $\mathbb {Var}(\widetilde j)$ and $\mathbb{SD}(\widetilde j)$: 
\begin{equation}
	\label{CD} \mathbb E(\widetilde j)\approx \sum_{k=1}^{N_{{z}, 3}} \widetilde j(t,\bz_k) \nu_k, \qquad
	\mathbb {Var}(\widetilde j) = \mathbb E(\widetilde j^2) - (\mathbb E(\widetilde j))^2, \qquad
	\mathbb{SD}(\widetilde j) = \sqrt{\mathbb{Var}(\widetilde j)}. 
\end{equation}

\subsubsection{Definition of errors}
The error in $\psi$ is computed by comparing $\psi$ with the reference solutions obtained from the second-order direct splitting method, 
where a sufficiently large number of collocation points $N_{z, 4}$ is used. Denote the set of mesh points by $M_{4}$. 
The error is measured under the averaged norm (\ref{energy}),
with the discretized form of the approximation shown in (\ref{E_approx}). More precisely, we use the relative error defined by 
\begin{equation}\label{Er_psi} \text{Er}[\psi] = \frac{||\psi_{G} - \psi_{D} ||_{\Gamma}}{||\psi_{D} ||_{\Gamma}}, 
\end{equation}
where $\psi_{G}$ represents the solution obtained by the GWPT method, and $\psi_{D}$ represents the one obtained from the direct splitting method. 
To compute $||\psi_{G} - \psi_{D} ||_{\Gamma}$ in (\ref{Er_psi}), 
one needs values of both $\psi_{G}(t, X_0, z_j)$ and $\psi_{D}(t, X_0, z_j)$ at the same set of mesh points of $z$, denoted by 
$M_{5}$ and the number of collocation points $N_{z, 5}$. Thus 
$$ ||\psi_{G} - \psi_{D}||_{\Gamma}^2 \approx \sum_{j=1}^{N_{{z}, 5}}\sum_{i=1}^{N_x} |\psi_{G}(x_i, {z}_j) - \psi_{D}(x_i, {z}_j)|^2 \Delta x\, \nu_j. $$
Here one needs to find the interpolated values of $\Psi_{G}$ and $\Psi_{D}$ corresponding to the mesh points 
$M_{5}$, from mesh points $M_{3}$ and $M_{4}$ respectively. 
We let $M_5=M_3$ for simplicity. All the interpolation in $z$ space refers to the spline interpolation. 

To quantify the errors in mean and standard deviation of the current density, we use
\begin{equation}
\label{Er_j}
\text{Er}_1 [j]  = \left|\frac{{\mathbb E}(\widetilde j_{G} - \widetilde j_{D})}{{\mathbb E}(\widetilde j_{D})}\right|, \qquad
\text{Er}_2 [j]  = \left|\frac{\mathbb {SD}(\widetilde j_{G} - \widetilde j_{D})}{\mathbb{SD}(\widetilde j_{D})}\right|, 
\end{equation}
where $\mathbb{E}$ and  $\mathbb{SD}$ are calculated by using (\ref{CD}). 
Note that the mass at each collocation point $\bz_k$ namely 
$\int |\psi(t,x,\bz_k)|^2 dx$ is a conserved quantity with respect to time and a constant. 
Thus it is not interesting to compute the relative errors of $\rho$ in terms of the $\Gamma$-norm as that for $j$. 


\subsection{Numerical Tests} 
\label{sec:numtest}

{\bf Part I: Relation between $N_{z,1}$, $N_{z,2}$, $N_{z,3}$, $N_{z,4}$ and $\e$ }

We know from the deterministic problem in \cite{GWPT} that all the stiffness in time and space of the original Schr\"odinger equation for $\psi$ associated with very small values of $\e$ is essentially been removed in the equation for $w(\eta,t)$ by the GWP transform. 
Since we proved in subsection \ref{Reg-W} that $w$ equation is not oscillatory in $z$, 
all the orders of $z$-derivatives of $w$ have a uniform upper bound, thus the number of collocation points used to solve $w$, namely $N_{z,2}$
is expected to be independent of $\varepsilon$. 

In Part I of the numerical tests, we will demonstrate for sufficiently large $N_{z,1}$ and $N_{z,3}$ that are proportional to 
$1/\e$ and $1/\sqrt{\e}$ respectively, one can choose $N_{z,2}$ {\it uniformly} with respect to $\e$. 
\\[2pt]

We put uncertainty in the potential function in Test (a1)--(a3).

{\bf Test (a1)} 

In this test, we assume $z$ a one-dimensional random variable that follows a uniform distribution on $[-1,1]$. Consider the spatial domain $x\in[-\pi, \pi]$
with periodic boundary conditions. 
The initial data of $\psi$ is
\begin{equation}\label{Psi-IC} \psi(x,0)=A \exp\bigl[(i/\e)\bigl(\alpha_0 (x-q_0)^2 + p_0 (x-q_0)\bigr)\bigr], \end{equation}
where $q_0=\pi/2, p_0=0, \alpha_0=i$. 
We name Test (a1-i) and Test (a1-ii) with different potentials: 
\begin{align*}
& \text {{\bf Test  (a1-i)}} \qquad   V(x,z)=(1+0.95z)x^2,  \\[4pt]
& \text {{\bf Test (a1-ii)}}  \qquad  V(x, z) = (1-\cos(x))(1+0.9z). 
\end{align*}
Let $\Delta t_1=0.01$, $\Delta\eta=0.3125$ in the $w$ equation, and $\Delta t_2=2.5\times 10^{-4}$ in the ODEs
for solving the parameters $p$, $q$, $\alpha$, $\gamma$.  $\Delta t=1/600$ and $\Delta x=2\pi/9600$ in the reference solutions. 
$T=1$ in Test (a1)--(a3). 

\begin{figure}[H]
\centering
\includegraphics[width=1.0\linewidth]{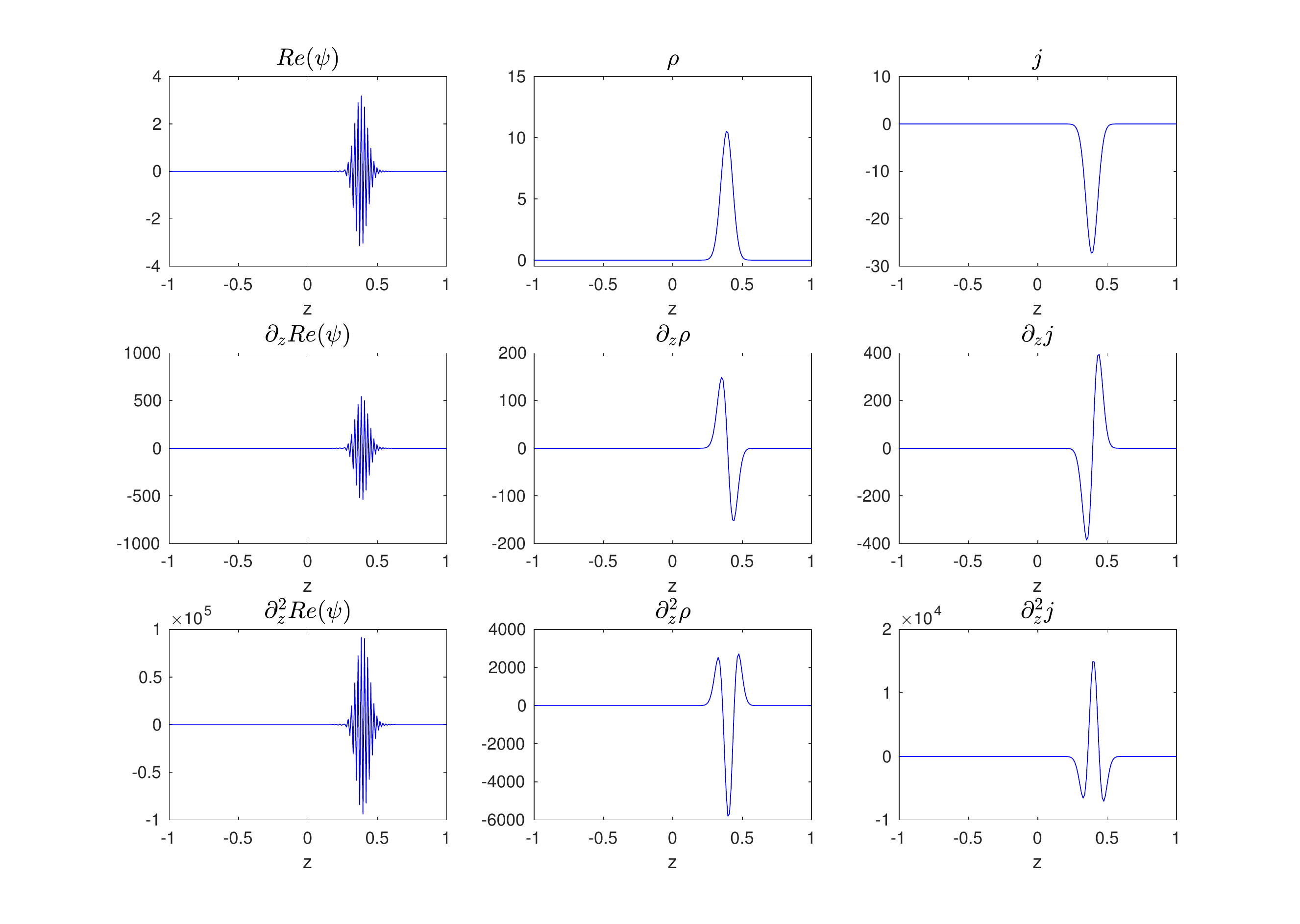}
\caption{Test (a1-i), $\varepsilon=1/256$. Plot at $x=-0.1316$, $T=1$. }
\label{TestI-a4}
\end{figure}

In Test (a1-i), we choose the random potential $V(x,z)=(1+0.95z)x^2$. 
The reason we choose potential function quadratic in $x$ is that the form of exact solution (\ref{Heller}) is known in this case. 
Solutions of $\text{Re}(\psi)$, $\rho$, $j$ as a function of $z$ at some physical location $x$ are plotted in Figure \ref{TestI-a4}. 
The plot indicates that 1) the behaviors of $\psi$ or $z$-derivatives of $\psi$ are much more oscillatory than that of $\rho$ and $j$; 
2) regarding the relations between the amplitude of $\text{Re}(\psi)$, $\rho$, $j$ and their first and second order 
partial $z$-derivatives, numerical results seem to suggest that $\text{Re}(\psi)$ increases by $O(1/\e)$ each time we differentiate it, 
and $\rho$, $j$ tend to increase by $O(1/\sqrt{\e})$ as the order of $z$-derivative increases. 
\\[2pt]

We now compare the trend of maximum values of the $z$-derivatives of $\psi$ and $w$ with respect to different $\e$: 
\begin{table}[H]
\begin{center}
\begin{tabular}{ |p{2.5cm} | p{2cm} | p{2cm}| p{2cm}| p{2cm}| p{2cm}| p{2cm}| }
 \hline
   $\varepsilon$  &  $\frac{1}{32}$ & $\frac{1}{64}$ &  $\frac{1}{128}$ & $\frac{1}{256}$ &  $\frac{1}{512}$ \\
 \hline 
  $\max\partial_z Re(\psi)$ &  $40.5971$ & $96.1058$ &  $228.0143$ & $541.3633$ &  $1.2843e+03$ \\ 
   \hline 
    $\max\partial_z Re(w)$ & $0.0618$ &  $0.0521$ & $0.0439$ & $0.0370$ & $0.0313$  \\ 
     \hline 
     \end{tabular}
    \caption{Test (a1-ii). Comparison of $\max\partial_z Re(\psi)$ and $\max\partial_z Re(w)$. }
    \label{Comp}
   \end{center}
\end{table}
One can observe from Table \ref{Comp} that $\max\partial_z \text{Re}(\psi)$ increase much more rapidly than $\max\partial_z \text{Re}(w)$, 
the former doubles its values as $\e$ decreases to half smaller, the latter slightly changes its values. This demonstrates that while $\psi$ is highly oscillatory in the random space, $w$ is smooth in $z$, and it guides us to choose the following number of collocation points: 
take sufficiently large $N_{z, 1}=N_{z, 3}=N_{z,4}=500$ and 
$N_{z, 2}=32$ in Test (a1)--(a3). 

\begin{figure}[H]
\includegraphics[width=0.49\linewidth]{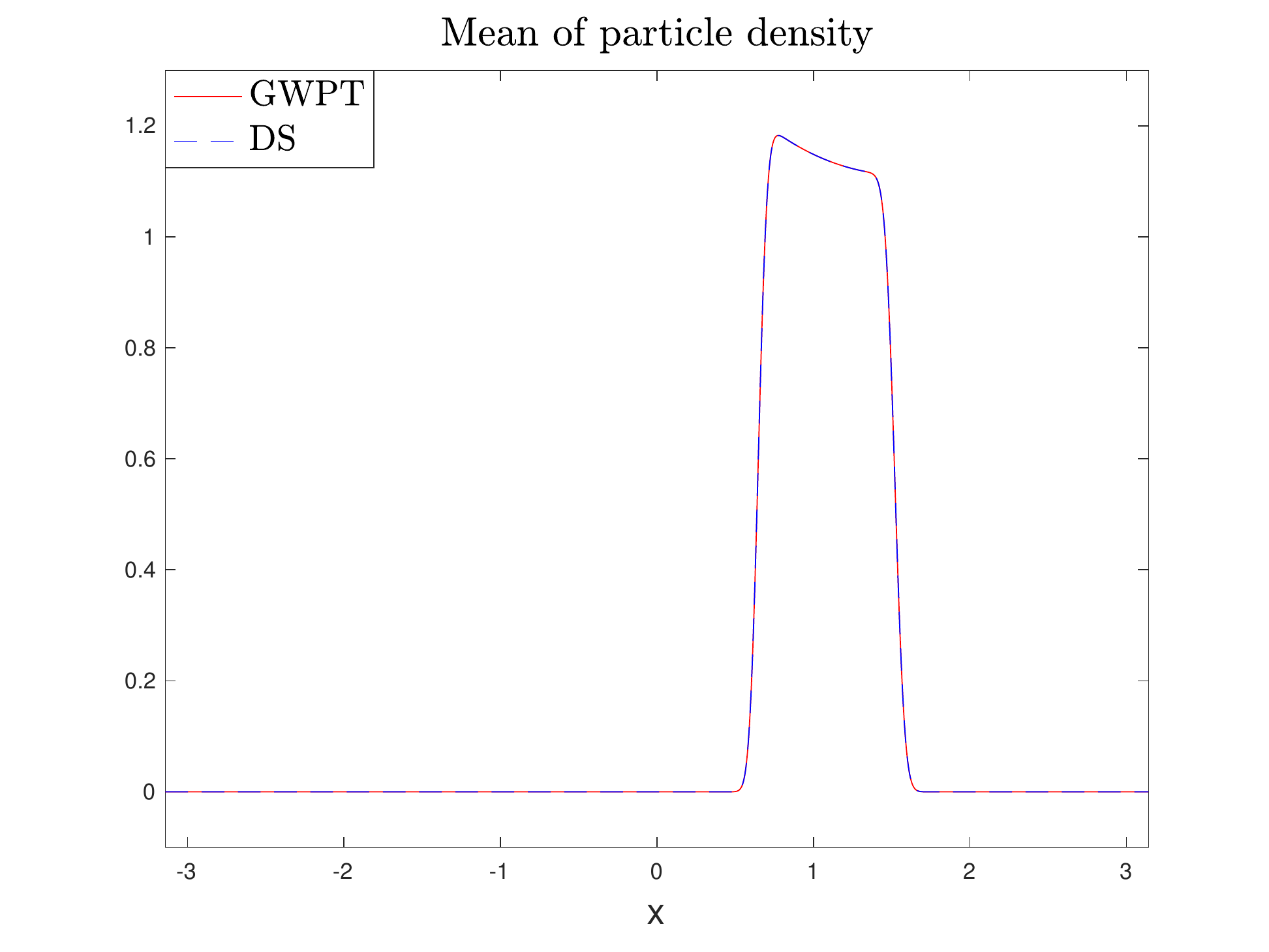}
\includegraphics[width=0.49\linewidth]{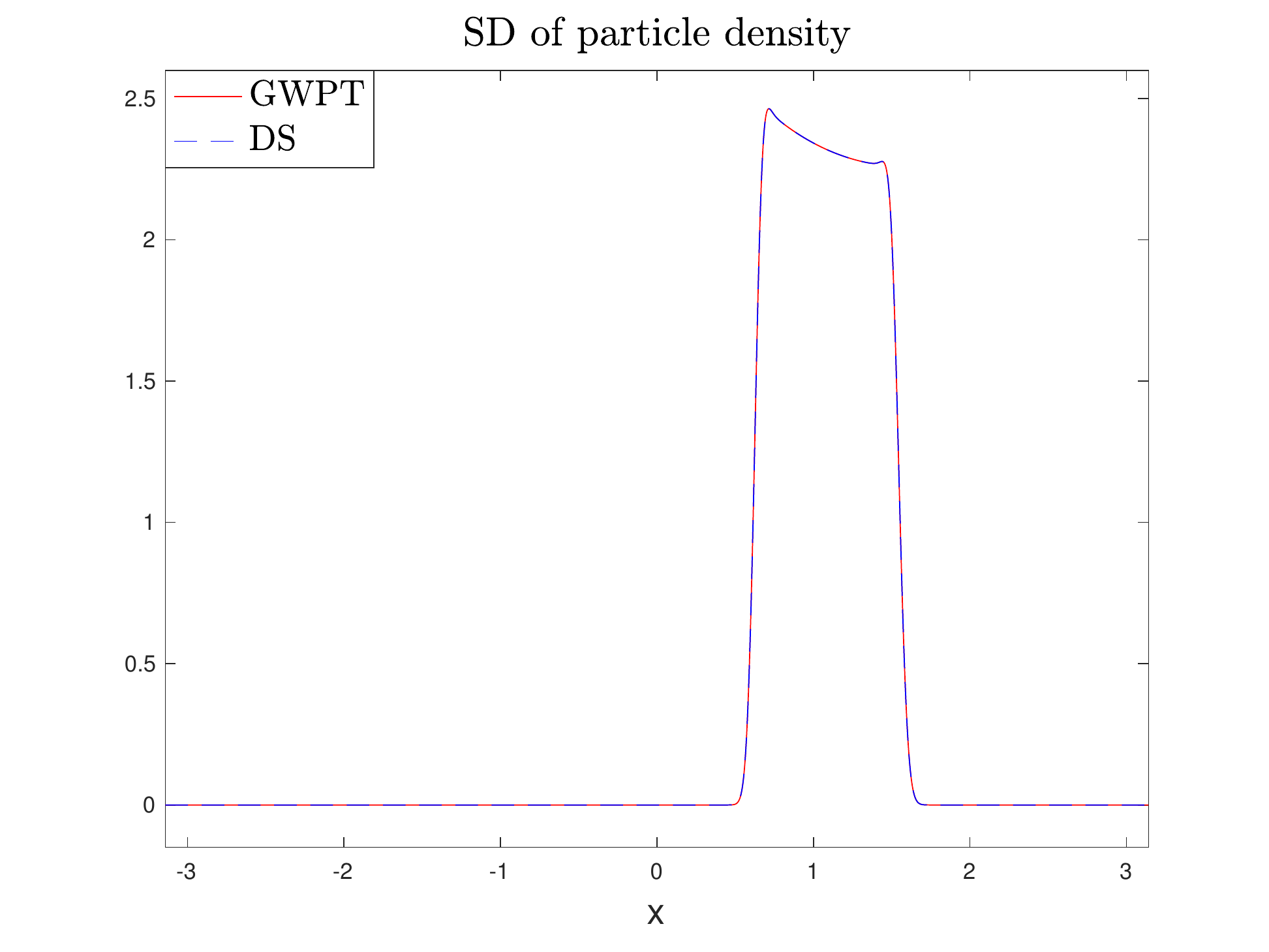}
\includegraphics[width=0.49\linewidth]{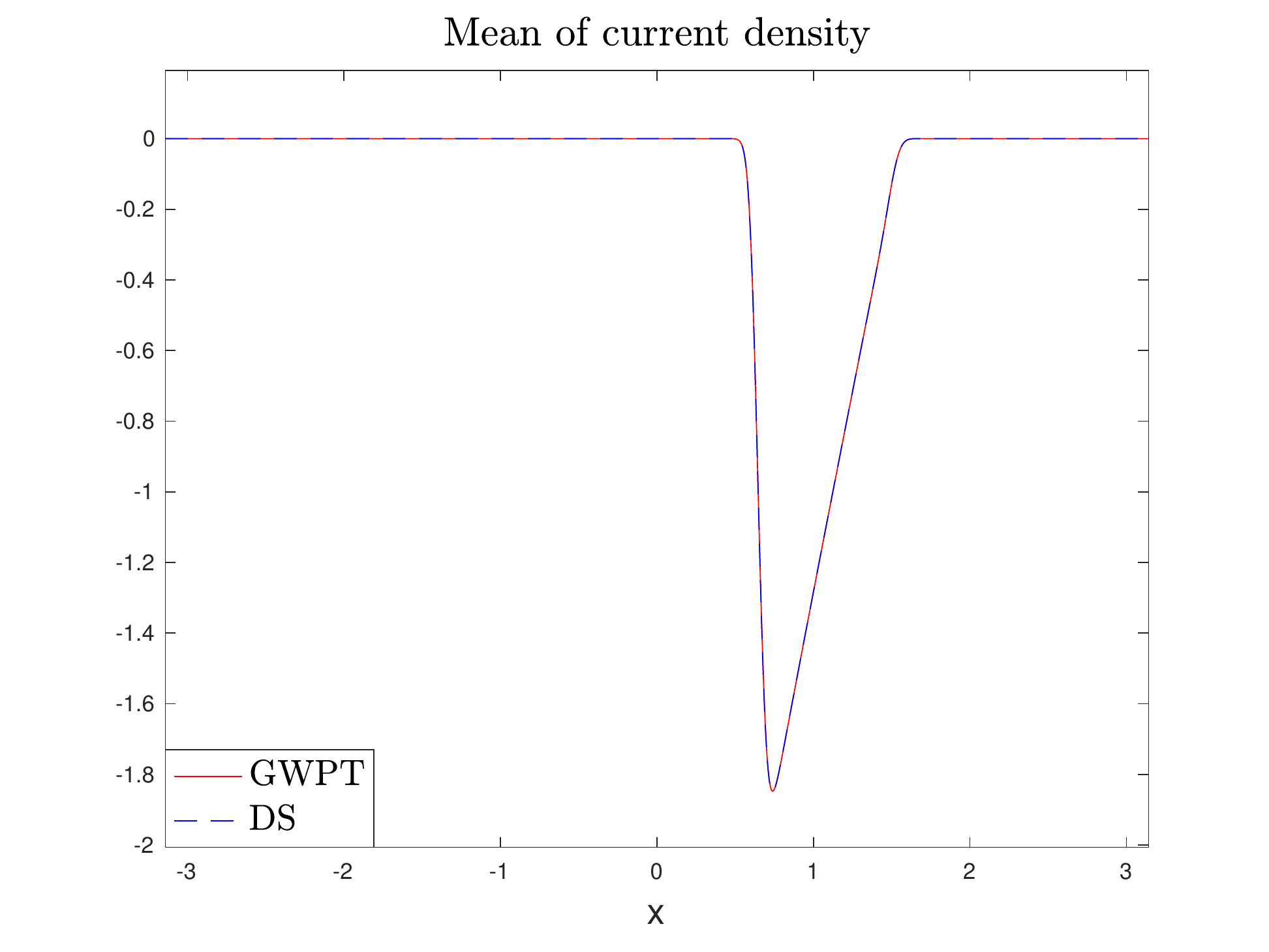}
\includegraphics[width=0.49\linewidth]{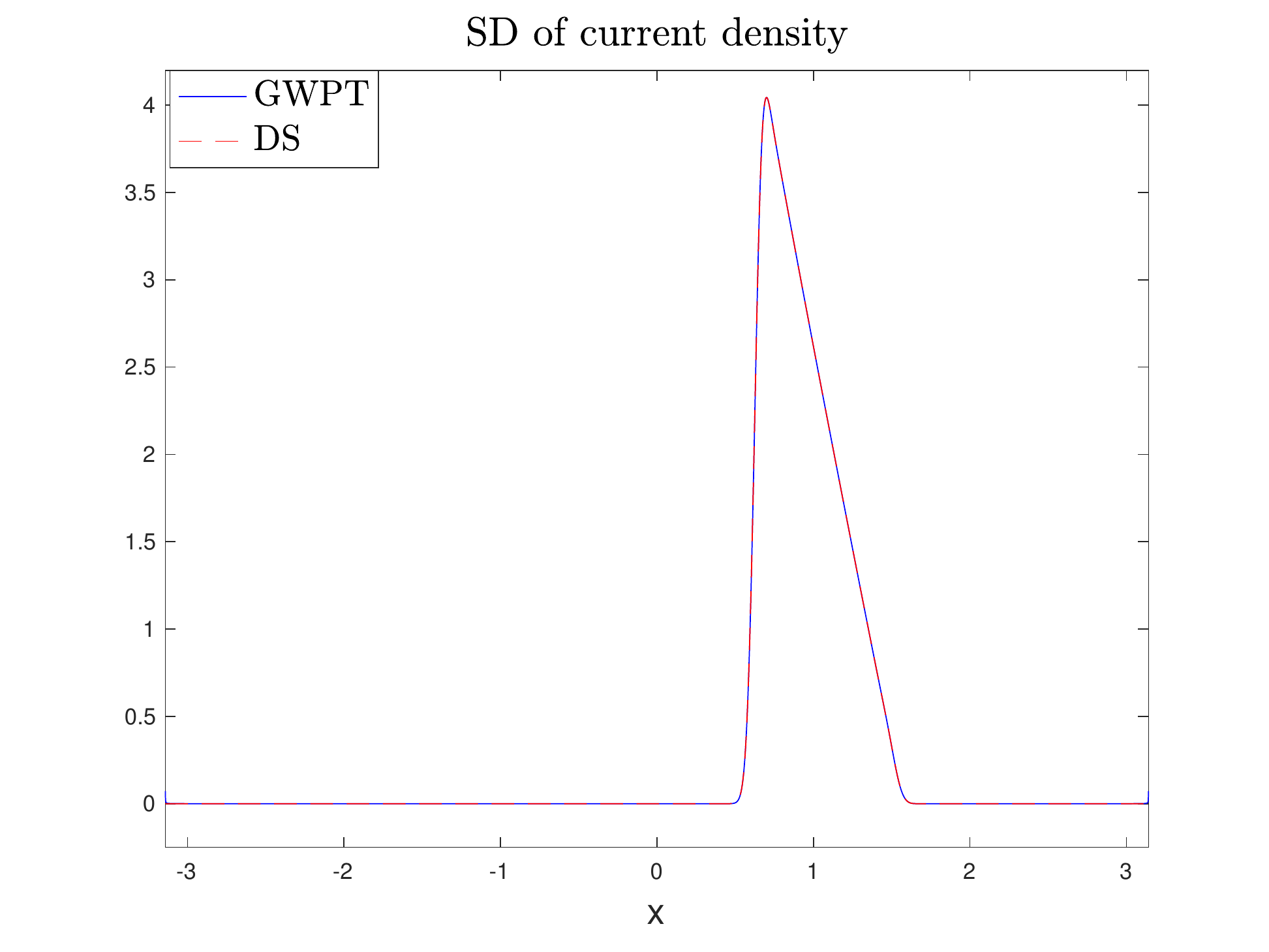}
\includegraphics[width=0.49\linewidth]{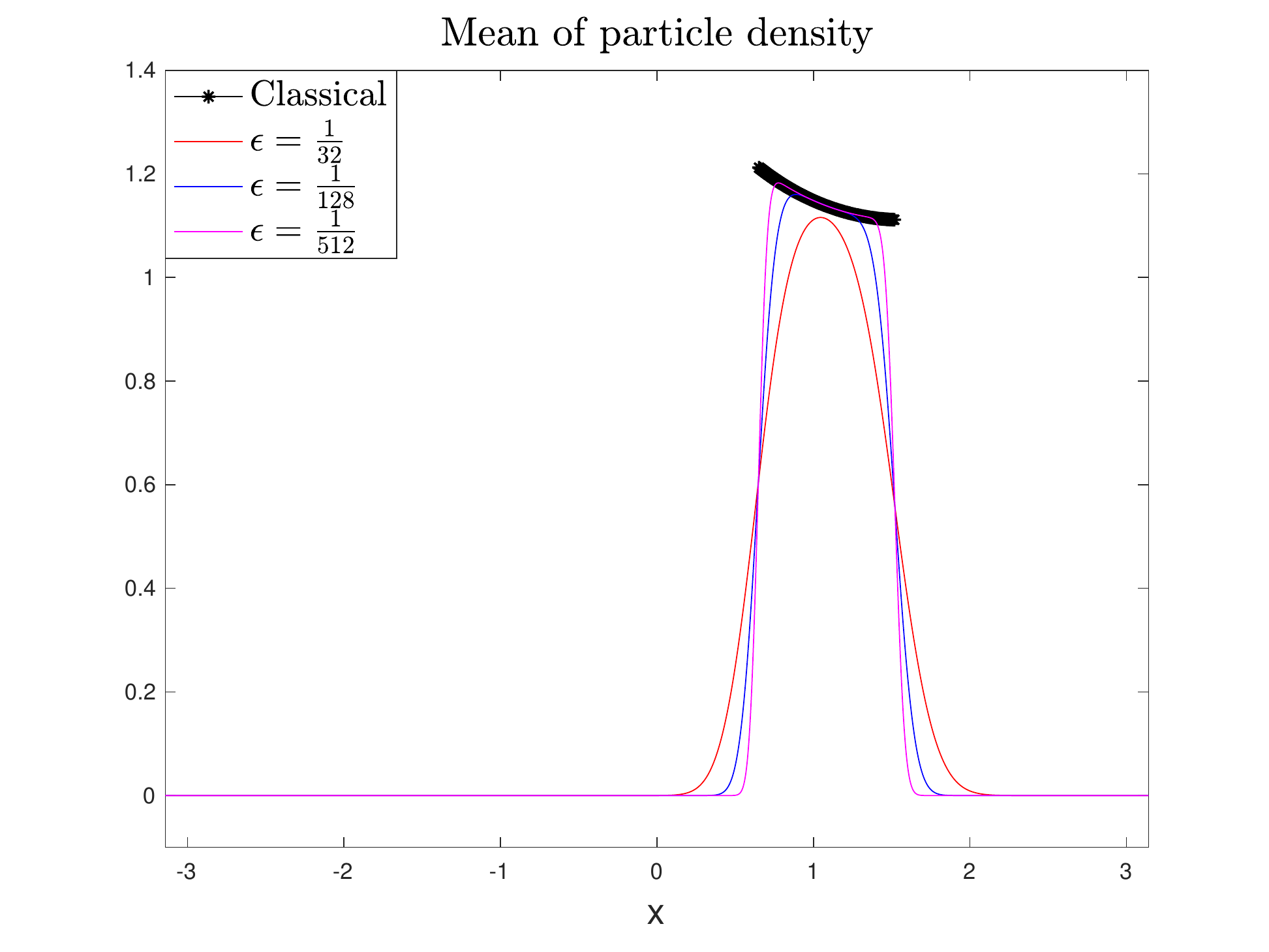}
\includegraphics[width=0.49\linewidth]{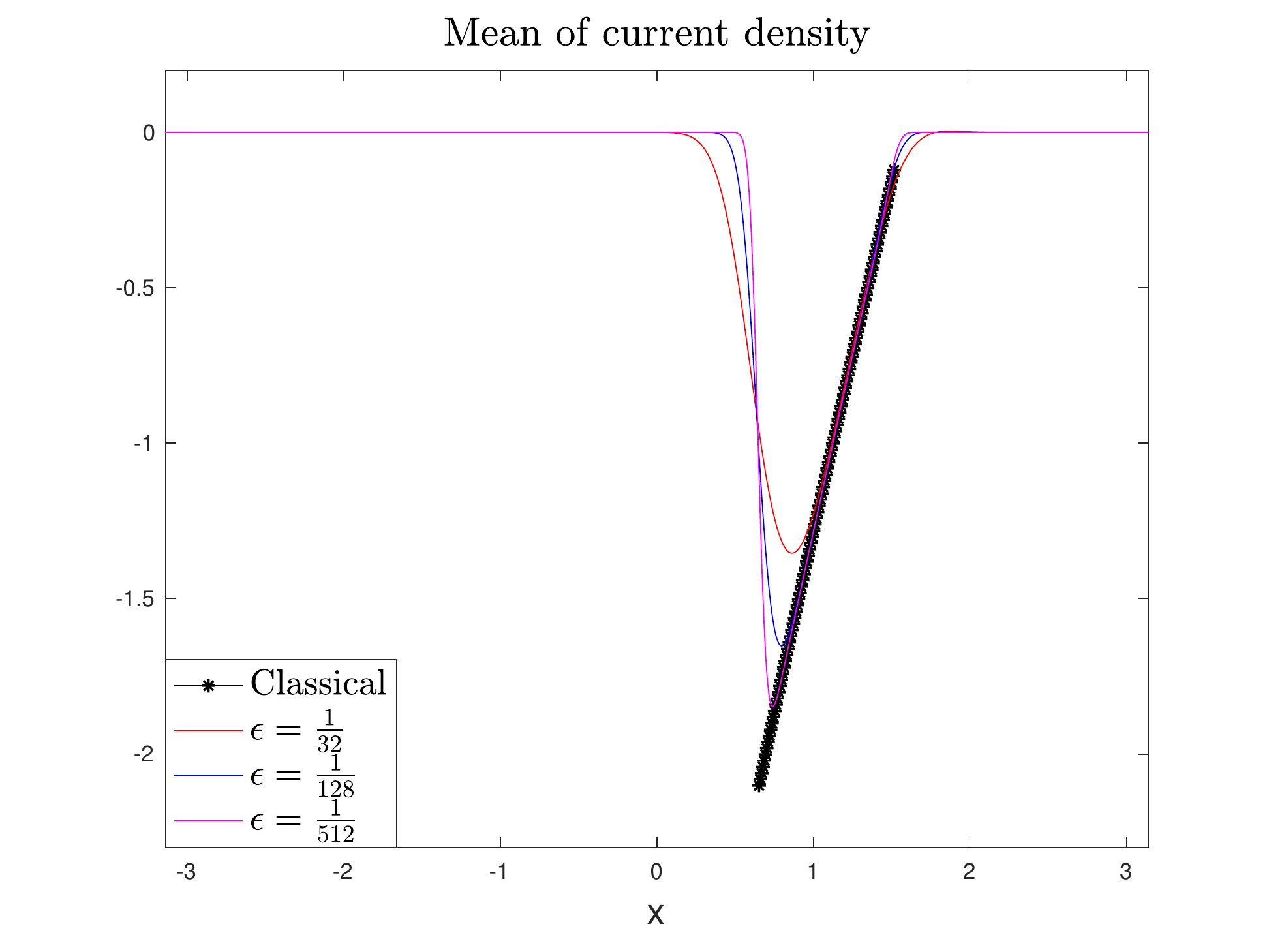}
\caption{Test (a1-ii). Comparison of the GWPT, DS and Classical method. $\e=\frac{1}{512}$ for the first two rows. }
\label{I-a1}
\end{figure}

\begin{table}[H]
\begin{center}
\begin{tabular}{ |p{1.2cm} | p{1.8cm} | p{1.8cm}| p{1.8cm} | p{1.8cm}|}
\hline
$\e$ & \text{GWPT} &\text{DS} & $N_{z,1}$ & $N$ \\
\hline
$\frac{1}{256}$ & $30.2$  & $40.6$  & $400$ & $400$    \\
\hline
$\frac{1}{512}$ & $44.4$ & $123.7$  & $600$ & $600$   \\
\hline
$\frac{1}{640}$ & $60.2$  & $286.3$ & $800$ & $800$ \\
\hline
$\frac{1}{768}$ & $70.6$ & $450.5$ & $900$  & $900$ \\
 \hline
 \end{tabular}
  \caption{
  Comparison of CPU time (in seconds) of using the GWPT and DS with different $\e$. $\Delta t_1$, $\Delta t_2$, $\Delta\eta$ in GWPT are the same as in Test (a1-i). $N_{z,2}=32$, 
  $N_{z,1}=N_{z,3}=N_{z,4}$ and $\Delta t=1/N$, $\Delta x = 2 \pi/6N$ in the DS method. }
  \label{T0}
     \end{center}
\end{table}

\begin{table}[H]
\begin{center}
\begin{tabular}{ |p{1cm} | p{2cm} | p{2cm}| p{2cm}| p{2cm}| p{2cm} | }
 \hline
  $\varepsilon$  & $\text{Er}[\psi]$   &  $\text{Er}_1[j]$  &  $\text{Er}_2[j]$      \\
 \hline
 $\frac{1}{32}$ & $2.9968e-05$  &  $3.1235e-07$ & $1.0328e-06$  \\
 \hline
     $\frac{1}{64}$ & $3.2442e-05$ & $3.6868e-07$ & $9.5517e-07$   \\
      \hline
      $ \frac{1}{128}$ & $3.1316e-05$ & $1.5865e-07$ & $4.2628e-07$     \\   
 \hline
         $\frac{1}{256}$ &  $3.7924e-05$ & $1.6370e-07$ & $3.8290e-07$   \\   
          \hline
         \end{tabular}
    \caption{Test (a1-ii). Error of $\psi$ and mean and standard deviation of $j$ with respect to different $\varepsilon$. }
      \label{T1}
   \end{center}
\end{table}

\begin{table}[H]
\begin{center}
\begin{tabular}{| p{1cm} | p{2cm} | p{2cm} | p{2cm} | p{2cm} |p{2cm} |}
 \hline
  $N_{z,2}$  & $\text{Er}[\psi]$  &  $\text{Er}_1[j]$   &  $\text{Er}_2[j]$    \\
   \hline
   $2$  & $0.0013$ &   $3.8978e-07$ & $5.4256e-05$   \\
   \hline
   $4$  & $1.7219e-05$ &  $7.4463e-09$ & $3.0955e-06$ \\
    \hline
   $8$  & $1.3416e-07$ &  $6.4828e-09$ & $4.1892e-08$ \\
     \hline
     $16$  & $7.5591e-09$ & $4.5480e-10$ & $1.4003e-09$  \\
      \hline
     $32$  &  $4.9371e-10$  &  $2.9899e-11$ & $8.8615e-11$ \\
    \hline
    \end{tabular}
\caption{Test (a1-ii), $\e=1/256$. Relative errors for solutions computed by increasing $N_{z,2}$. }
  \label{T2}
 \end{center}
\end{table}


Test (a2)--(a3) use the same initial data of $\psi$ as shown in (\ref{Psi-IC}). 
\\[2pt]

{\bf Test (a2)}  Let $z$ follow the normal Gaussian distribution, with the random potential 
$$ V(x, z) = (1-\cos(x))(1+0.9z). $$ 

\begin{table}[H]
\begin{center}
\begin{tabular}{ |p{1cm} | p{2.5cm} | p{2.5cm}| p{2.5cm}|}
 \hline
  $\varepsilon$  & $\text{Er}[\psi]$    &  $\text{Er}_1[j]$   &  $\text{Er}_2[j]$      \\
 \hline
   $\frac{1}{32}$ & $4.8231e-05$ &  $2.4561e-07$ & $1.8024e-05$    \\
   \hline
     $\frac{1}{64}$ & $3.0519e-05$ &  $2.3575e-07$ &  $8.7407e-06$  \\ 
    \hline
      $ \frac{1}{128}$  & $3.1520e-05$  &  $2.3147e-07$ & $6.1686e-06$  \\   
    \hline
         $\frac{1}{256}$  &  $3.7955e-05$ & $2.1877e-07$ & $5.3444e-06$   \\
          \hline
         \end{tabular}
    \caption{Test (a2). Error of $\psi$ and mean and standard deviation of $j$ with respect to different $\varepsilon$. }
     \label{T3}
   \end{center}
\end{table}


{\bf Test (a3)}
We let \begin{equation}\label{V1} V(x, z) = (1-\cos(x))(1+ \varepsilon z), 
\end{equation}
with $z$ followed the normal Gaussian distribution. 

\begin{table}[H]
\begin{center}
\begin{tabular}{ |p{1cm} | p{2cm} | p{2cm}| p{2cm}| p{2cm}| p{2cm} | }
 \hline
  $\varepsilon$   & $\text{Er}[\psi]$   &  $\text{Er}_1[j]$   &  $\text{Er}_2[j]$      \\
 \hline
   $\frac{1}{32}$ & $2.9898e-05$ &   $4.9752e-08$ & $8.3984e-07$  \\
    \hline
     $\frac{1}{64}$ &  $3.2074e-05$ &  $1.4983e-07$ & $7.9849e-07$  \\ 
    \hline
      $ \frac{1}{128}$  & $3.1146e-05$ & $5.9345e-08$ & $3.5599e-07$  \\   
    \hline
         $\frac{1}{256}$  & $3.7461e-05$ &  $8.1869e-08$ & $3.3017e-07$  \\
          \hline
         \end{tabular}
    \caption{Test (a3). Error of $\psi$ and mean and standard deviation of $j$ with respect to different $\e$.  }
   \label{T4}
   \end{center} 
  \end{table}

In Figure \ref{I-a1}, we see that solutions of the mean and standard deviation of $\rho$ and $j$ obtained from the GWPT match well with 
the reference solutions calculated from the time splitting spectral method \cite{BaoJin}. 
In the last row of Figure \ref{I-a1}, we compare the mean of $\rho$ and $j$ obtained from the GWPT for several values of $\e$ (lines) with the mean density and current obtained by classical mechanics (dots). 
One observes a tendency of convergence of solutions by using the GWPT to the classical case as $\e$ becomes smaller from $\e=\frac{1}{32}$ and $\e=\frac{1}{128}$ to $\e=\frac{1}{512}$.  
However, it is yet to be specified the precise description of the weak convergence, which remains a difficult question, as discussed at the end of subsection \ref{Semi-Limit}. 
{\color{black}In Table \ref{T0}, we compare the CPU time of using the GWPT and the time-splitting spectral method \cite{BaoJin} for Test (a1-ii) at output time $T=0.3$, with various $\e$ values. The experiment was done using MATLAB R2018b on macOS Mojave system with 2.4 GHz Intel Core i5 processor and 8GB DDR3 memory.
One can observe that the computational saving of the GWPT becomes more apparent as 
$\e$ decreases. The efficiency of the GWPT compared to the commonly used time-splitting spectral method is clearly demonstrated. 
}

In Tables \ref{T1}, \ref{T3} and \ref{T4}, one observes a uniform accuracy for $\psi$ and $j$ with respect to different small values of $\e$. 
Thus we conclude that in order to capture $\psi$ and $j$, 
$\e$-independent $N_{z,2}$ ($N_{z,2}=32$) can be used for all small $\e$, 
by putting $N_{z,1}$, $N_{z,3}$ and $N_{z,4}$ sufficiently large. 
In Table \ref{T2}, fixing $\e$, we see a fast spectral convergence of relative errors between using $N_{z,2}$ and $2N_{z,2}$ for $\psi$ and $j$, 
which indicates again that small $\e$-independent $N_{z,2}$ can be chosen to obtain accurate values of $\psi$ and $j$. 

In the following Test (b) and Test (c), we let the initial data for $\psi$ depend on the random variable $z$ that 
follows a uniform distribution on $[-1,1]$. Let $N_{z,1}=N_{z,3}=N_{z,4}=500$, $N_{z,2}=32$ in Test (b) and Test (c). 
\\[2pt]

{\bf Test (b)} Let 
$$ V(x) = 1- \cos(x),  \qquad \psi(x,0, z)=A \exp\bigl[(i/\e)\bigl(\alpha_0 (x-q_0)^2 + p_0(x-q_0)\bigr)\bigr]. $$ 
Here we assume $q_0$ random, 
$$ q_0=\frac{\pi}{2}(1+0.5z), \qquad p_0=0, \qquad \alpha_0=i. $$

\begin{table}[H]
\begin{center}
\begin{tabular}{ |p{1cm} | p{2cm} | p{2cm}| p{2cm}| p{2cm}| p{2cm}| }
 \hline
  $\varepsilon$  &   $\text{Er}[\psi]$    &  $\text{Er}_1[j]$   &  $\text{Er}_2[j]$      \\
 \hline
    $\frac{1}{32}$ & $1.1114e-04$ &  $2.5535e-07$ & $1.5763e-06$  \\
  \hline
    $\frac{1}{64}$ &  $3.1764e-05$ &  $7.8356e-08$ & $2.4191e-06$   \\
     \hline
     $\frac{1}{256}$  &  $5.8151e-05$ & $1.5883e-07$ & $1.9183e-06$    \\
      \hline
    \end{tabular}
    \caption{Test (b). Error of $\psi$ and mean and standard deviation of $j$ with respect to different $\varepsilon$. $T=1$. }
\label{I-b}
 \end{center}
\end{table}

{\bf Test (c)} Let
$$ V(x) = 1- \cos(x),  \qquad \psi(x,0, z)=A \exp\bigl[(i/\e)\bigl(\alpha_0 (x-q_0)^2 + p_0(x-q_0)\bigr)\bigr], $$ 
with $q_0$, $p_0$ depend on $z$, 
$$q_0=\frac{\pi}{2}(1+0.5z), \qquad p_0=0.5z, \qquad \alpha_0=i. $$ 

\begin{table}[H]
\begin{center}
\begin{tabular}{ |p{1cm} | p{2cm} | p{2cm}| p{2cm}| p{2cm}| p{2cm}| }
 \hline
  $\varepsilon$  &  $\text{Er}[\psi]$  &  $\text{Er}_1[j]$   &  $\text{Er}_2[j]$      \\
 \hline
   $\frac{1}{32}$  & $1.6159e-04$ &   $1.8413e-07$ & $5.9809e-07$  \\
  \hline
    $\frac{1}{64}$ &  $2.5781e-05$ &  $1.1032e-07$ & $1.9014e-07$  \\ 
  \hline
      $\frac{1}{256}$  & $2.5840e-05$ &  $1.1882e-08$ &  $1.3267e-07$  \\
   \hline
    \end{tabular}
    \caption{Test (c). Error of $\psi$ and mean and standard deviation of $\rho$ and $j$ with respect to different $\varepsilon$. $T=0.5$. }
\label{I-c}
 \end{center}
\end{table}
Table \ref{I-b} and Table \ref{I-c} give the same conclusion as Tests (a1)--(a3), that is, 
$\e$-independent $N_{z,2}$ (and sufficient large $N_{z,1}$, $N_{z,3}$, $N_{z,4}$) can be chosen to get accurate $\psi$ and $j$. 

Now we perform a two-dimensional random variable test: 
\\[2pt]

{\bf Test (d)}
Let the random potential be
\begin{equation}\label{2D-V}
V(x,z)=(1-\cos(x))(1+0.2 z_1+0.7 z_2), \end{equation}
where $z_1$, $z_2$ both follow uniform distributions on $[-1,1]$. The initial data of $\psi$ is given by (\ref{Psi-IC}). 
$\varepsilon=0.1$. Use $N_{z,1}=N_{z,2}=N_{z,3}=N_{z,4}=32$. 
Solutions and errors at time $T=1$ are shown in Figure \ref{2D} below. 

\begin{figure}[H]
\centering
\includegraphics[width=0.45\linewidth]{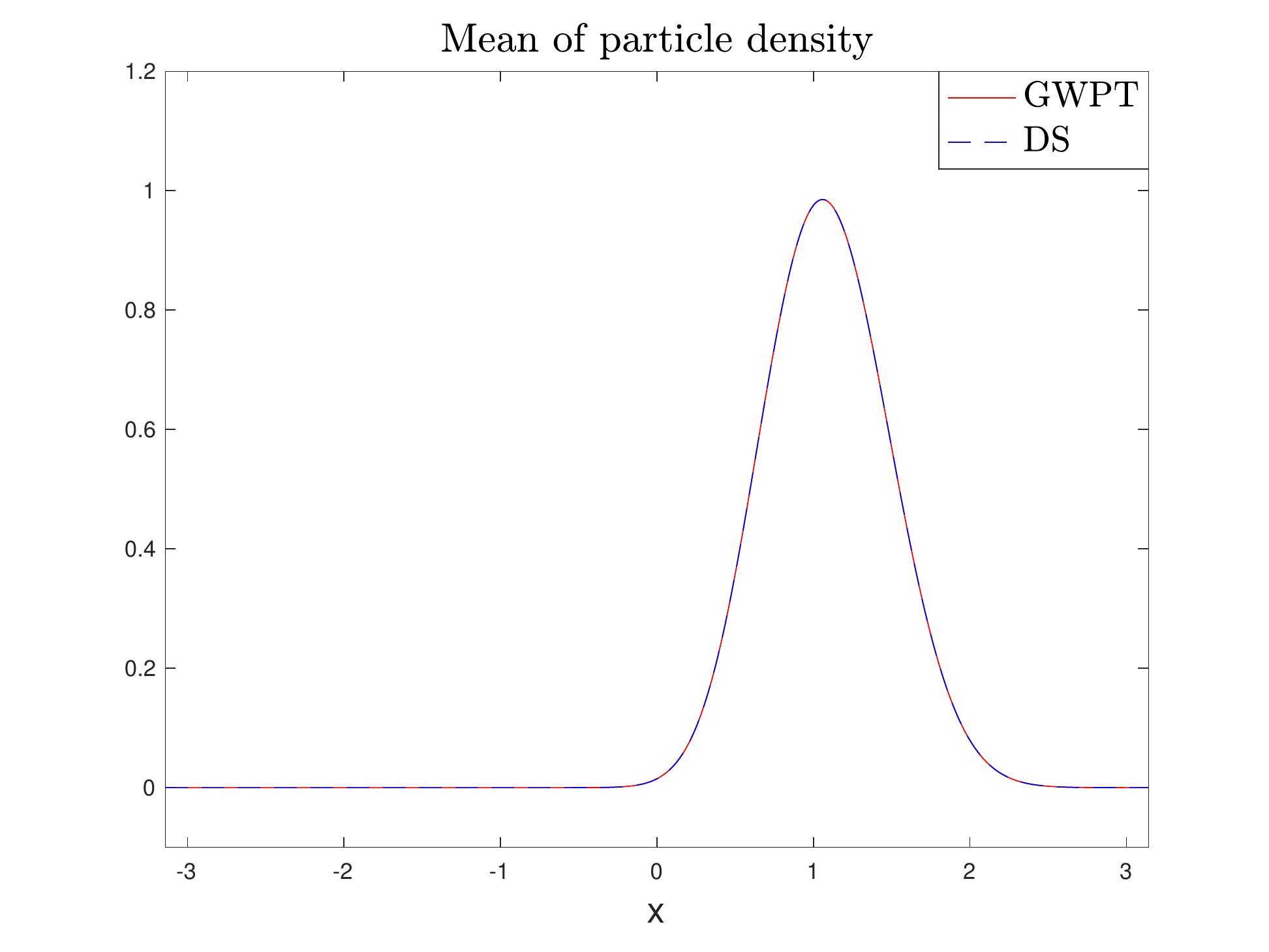}
\centering
\includegraphics[width=0.45\linewidth]{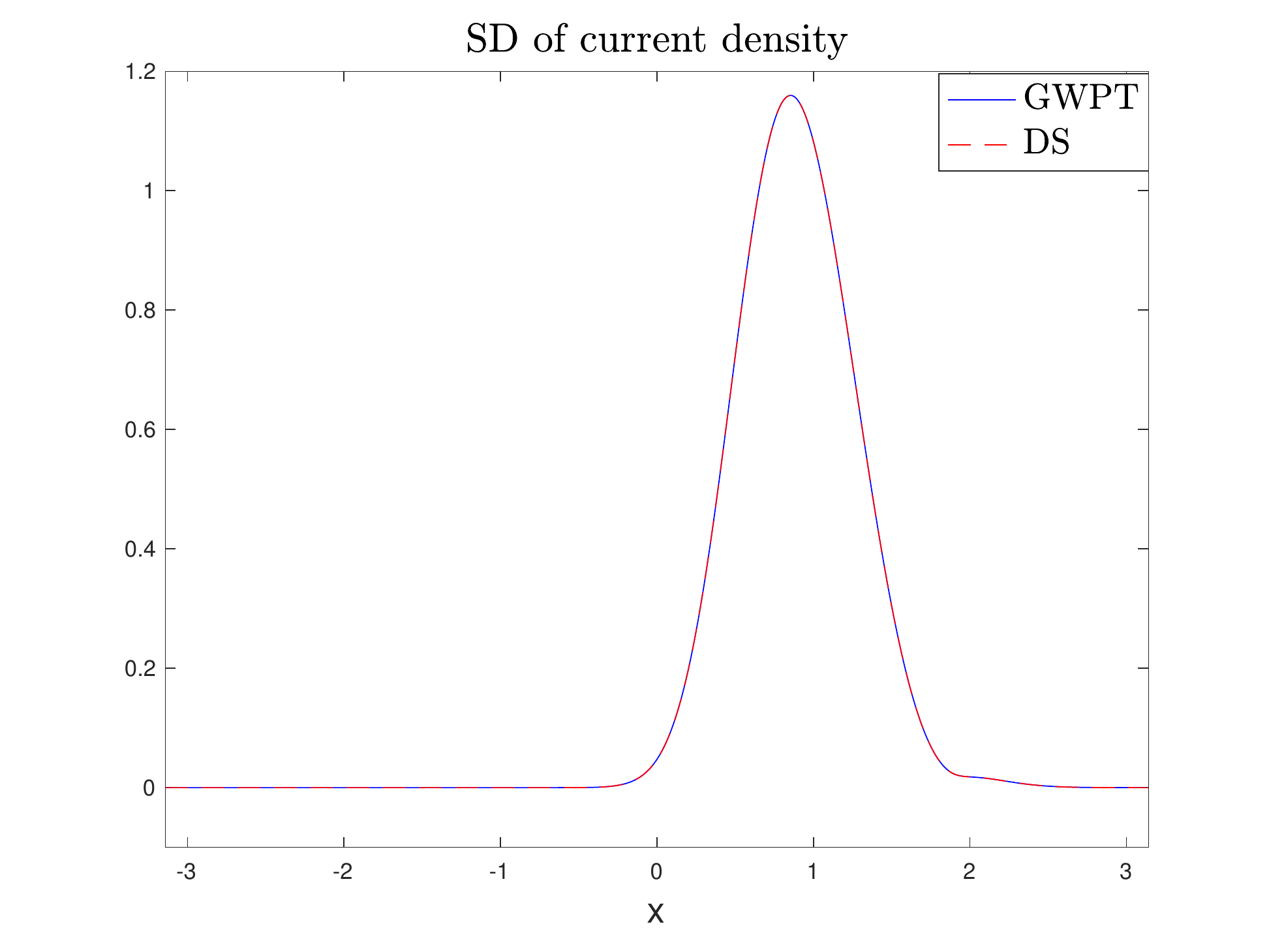}
\centering
\includegraphics[width=0.45\linewidth]{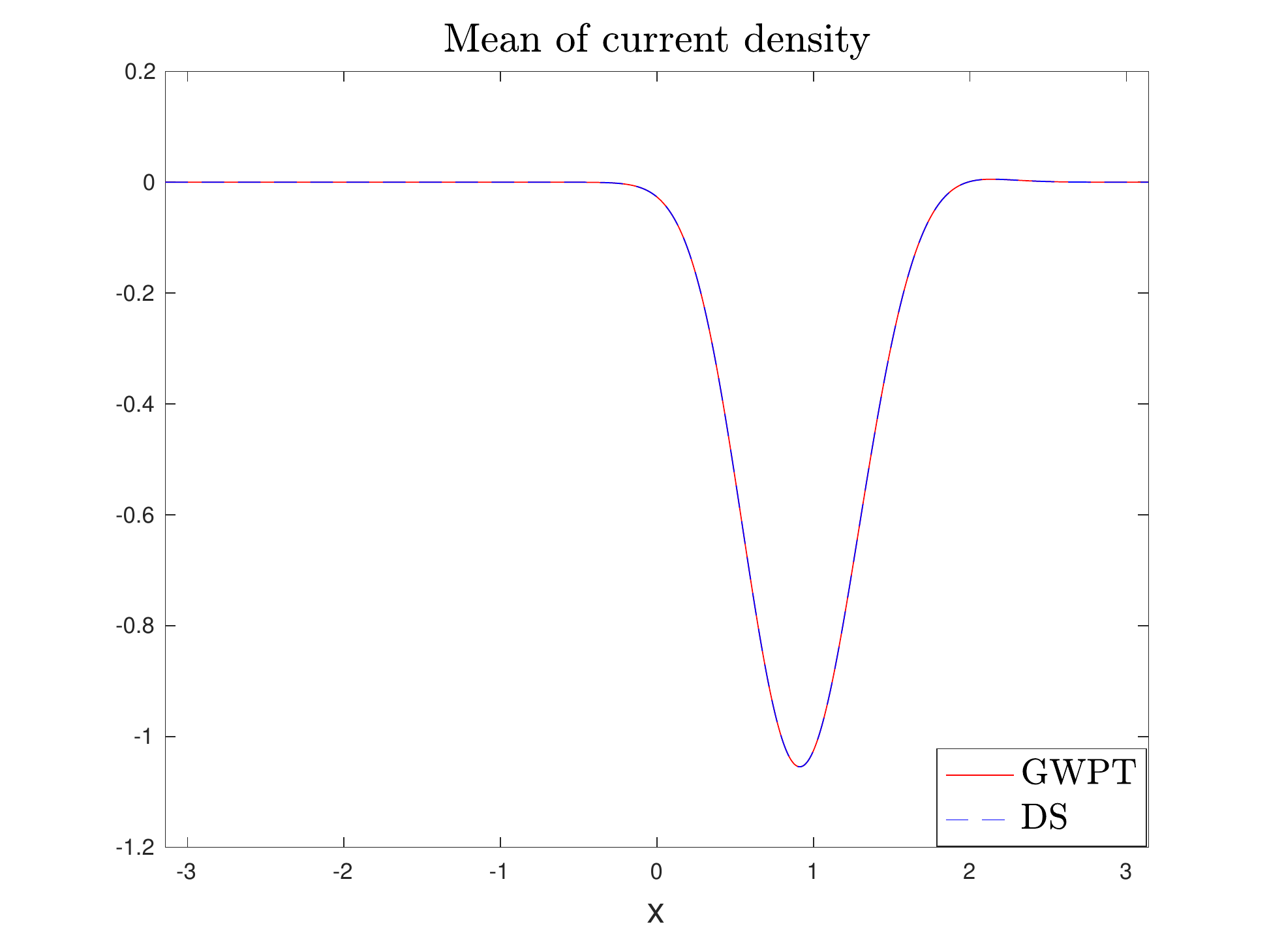}
\centering
\includegraphics[width=0.45\linewidth]{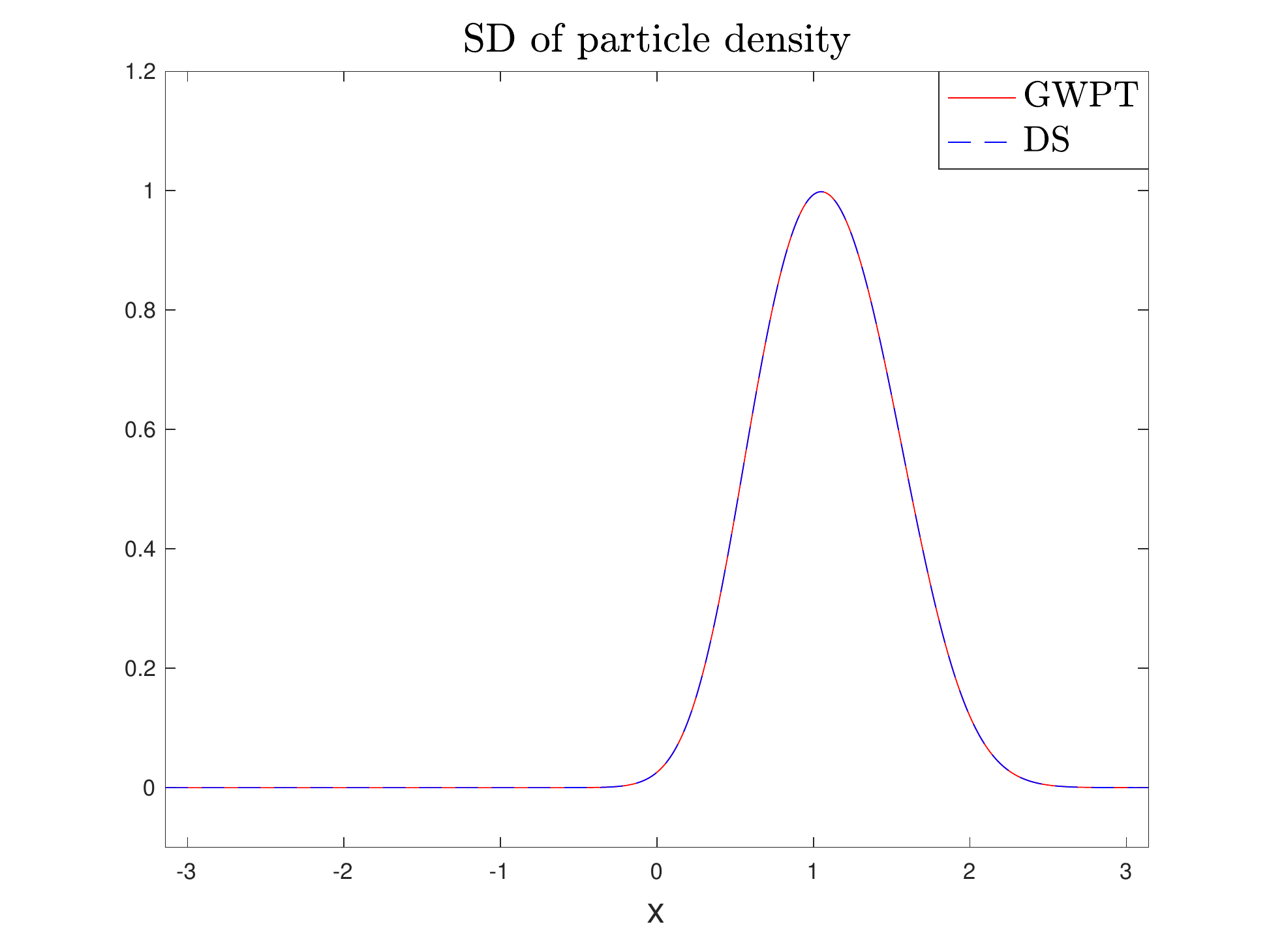}
\begin{table}[H]
\begin{center}
\begin{tabular}{ | p{2cm} | p{2cm} | p{2cm} | p{2cm} |p{2cm} |}
 \hline
 $\text{Er}[\psi]$   &  $\text{Er}_1[j]$   &  $\text{Er}_2[j]$    \\
\hline
$4.2501e-04$ &  $1.2864e-06$ & $1.7532e-06$  \\
\hline
\end{tabular}
 \end{center}
\end{table}
\caption{Test (d). Comparison of GWPT with DS. }
\label{2D}
\end{figure}

Figure \ref{2D} shows that we can capture accurately the mean and standard deviation of $\rho$ and $j$ when the random potential 
has a two-dimensional random variable. 
Since $\e$ in this test is not so small, $N_{z,1}$, $N_{z,2}$, $N_{z,3}$ and $N_{z,4}$ do not have to be very large. 
\\[2pt]

\noindent {\bf Part II: Comparison between different perturbations in $V(x,z)$ and an error vs. time plot }

We compare three different orders of perturbations, 
using $\e$-independent $N_{z,1}$, $N_{z,2}$, $N_{z,3}$ and 
$\e$-dependent $N_{z,4}$. 

We first introduce Test (a4) and compare it with Test (a2) and Test (a3). 
\\[2pt]

{\bf Test (a4)}
We assume that $z$ follows the normal Gaussian distribution, and the random potential given by 
\begin{equation}\label{V2} V(x, z) = (1-\cos(x))(1+\sqrt{\e}z). \end{equation}

If one only needs the information of macroscopic quantities such as the 
current density, whose errors are measured by $\text{Er}_1[j]$ and $\text{Er}_2[j]$ in  (\ref{Er_j}), 
then the collocation points $N_{z, 1}$, $N_{z, 2}$, $N_{z, 3}$ 
used in the GWPT method can be chosen independently of small $\varepsilon$. 

From Tables \ref{P1}, \ref{P2} and \ref{P3} below, in which
$N_{z,1}=N_{z,2}=N_{z,3}=32$, $N_{z,4}=500$ and $T=1$. 
Recall that Test (a2) has a $(1+0.9z)$ perturbation in $V(x,z)$; 
Test (a3) has a $(1+\e z)$ perturbation in $V(x,z)$, and Test (a4) has a $(1+\sqrt{\e}z)$ perturbation in $V(x,z)$. 
One observes that errors for the mean and standard deviation of $j$ in the three tests are uniformly small with respect to 
different values of small $\e$. This is usually not the case for $\psi$, whose errors $\text{Er}[\psi]$ increase for smaller values of $\varepsilon$, 
because of its increasingly oscillatory behavior. 
However, the errors of $\psi$ in Table \ref{P1} are shown to be much smaller than that in Tables \ref{P2} and \ref{P3}, 
which indicates that if the random perturbation of the potential is relatively small, e.g., of $O(\e)$ perturbation in the test of Table \ref{P1}
compared to $O(\sqrt{\e})$ and $O(1)$ perturbation in the tests of Table \ref{P2} and \ref{P3} respectively, 
then $\e$-independent $N_{z,1}$, $N_{z,2}$, $N_{z,3}$ can be used to capture $\psi$ accurately. 

\begin{table}[H]
\begin{center}
\begin{tabular}{ |p{1cm} | p{2cm} | p{2cm}| p{2cm}| p{2cm}| p{2cm} | }
 \hline
  $\varepsilon$   & $\text{Er}[\psi]$  &  $\text{Er}_1[j]$   &  $\text{Er}_2[j]$      \\
 \hline
   $\frac{1}{32}$ & $2.8859e-05$ &   $9.5081e-08$ & $4.1857e-07$  \\
    \hline
     $\frac{1}{64}$ &  $2.9239e-05$ &  $8.4372e-09$ & $3.7980e-07$    \\ 
    \hline
      $ \frac{1}{128}$  & $3.1145e-05$ &  $5.9345e-08$ &  $3.5599e-07$  \\   
    \hline
         $\frac{1}{256}$  & $3.7461e-05$ &  $8.1869e-08$ & $3.3013e-07$  \\
          \hline
         \end{tabular}
    \caption{Test (a3). Error of $\psi$ and mean and standard deviation of $j$ with respect to different $\e$. }
  \label{P1}
   \end{center}
\end{table}

\begin{table}[H]
\begin{center}
\begin{tabular}{ |p{1cm} | p{2cm} | p{2cm}| p{2cm}| p{2cm}| p{2cm} | }
 \hline
  $\varepsilon$   & $\text{Er}[\psi]$  &  $\text{Er}_1[j]$   &  $\text{Er}_2[j]$      \\
 \hline
   $\frac{1}{32}$ &  $5.0359e-04$ &  $3.2411e-08$ & $5.8941e-07$ \\
   \hline
     $\frac{1}{64}$ &  $0.0020$ &  $6.2931e-08$ & $5.3575e-07$  \\ 
    \hline
      $ \frac{1}{128}$  &  $0.0093$ &  $1.0983e-07$ & $5.0308e-07$  \\   
    \hline
         $\frac{1}{256}$  &  $0.0535$ & $1.2743e-07$ & $4.5854e-07$  \\
          \hline
    \end{tabular}
    \caption{Test (a4). Error of $\psi$ and mean and standard deviation of $j$ with respect to different $\varepsilon$.}
    \label{P2}
   \end{center}
\end{table}

\begin{table}[H]
\begin{center}
\begin{tabular}{ |p{1cm} | p{2cm} | p{2cm}| p{2cm}| p{2cm}| p{2cm} | }
 \hline
  $\varepsilon$   &  $\text{Er}[\psi]$   &  $\text{Er}_1[j]$   &  $\text{Er}_2[j]$      \\
 \hline
   $\frac{1}{32}$ & $0.5842$ & $4.2896e-07$ & $1.8105e-05$   \\
    \hline
     $\frac{1}{64}$ &  $1.9324$ & $4.6665e-07$ & $8.8690e-06$  \\ 
    \hline
      $ \frac{1}{128}$  & $1.8047$ &  $4.7258e-07$ & $6.4676e-06$   \\   
    \hline
         $\frac{1}{256}$  & $1.6294$ & $4.1936e-07$ & $4.6450e-06$   \\
          \hline
         \end{tabular}
    \caption{Test (a2). Error of $\psi$ and mean and standard deviation of $j$ with respect to different $\e$. }
  \label{P3}
 \end{center}
\end{table}

{\bf part (iv): } Semi-log error plot vs. time 

\begin{table}[H]
\begin{center}
\begin{tabular}{ |p{1cm} | p{2cm} | p{2cm}| p{2cm}| p{2cm}| p{2cm}|  }
 \hline
$T$  &  $\text{Er}[\psi]$ &  $\text{Er}_1[j]$   &  $\text{Er}_2[j]$      \\
 \hline
   $1$ & $3.9592e-05$ &  $1.9887e-07$ & $7.6387e-07$    \\ 
    \hline
     $2^{1/2}$  & $6.4139e-05$  & $1.1964e-06$ & $3.5285e-06$  \\
  \hline
     $2$ & $6.3926e-05$  & $6.5872e-07$ & $3.9316e-06$    \\ 
  \hline
      $2^{3/2}$ & $4.2088e-04$ & $3.9568e-06$ & $4.9694e-06$ \\
   \hline 
       $4$ &  $9.0106e-05$ &  $3.1581e-06$ & $2.6140e-06$      \\
    \hline 
       $2^{5/2}$ & $8.2447e-04$ &  $1.6763e-06$ & $3.6021e-05$  \\
    \hline 
       $8$ &  $0.0037$ & $1.1940e-05$ & $3.0379e-04$    \\
    \hline
      \end{tabular}
    \caption{Test (a4). Error of $\psi$ and mean and standard deviation of $j$ with respect to different time. $\e=1/128$. }
\label{Time-T}
\end{center}
\end{table}

\begin{figure}[H]
\centering
\includegraphics[width=0.6\linewidth]{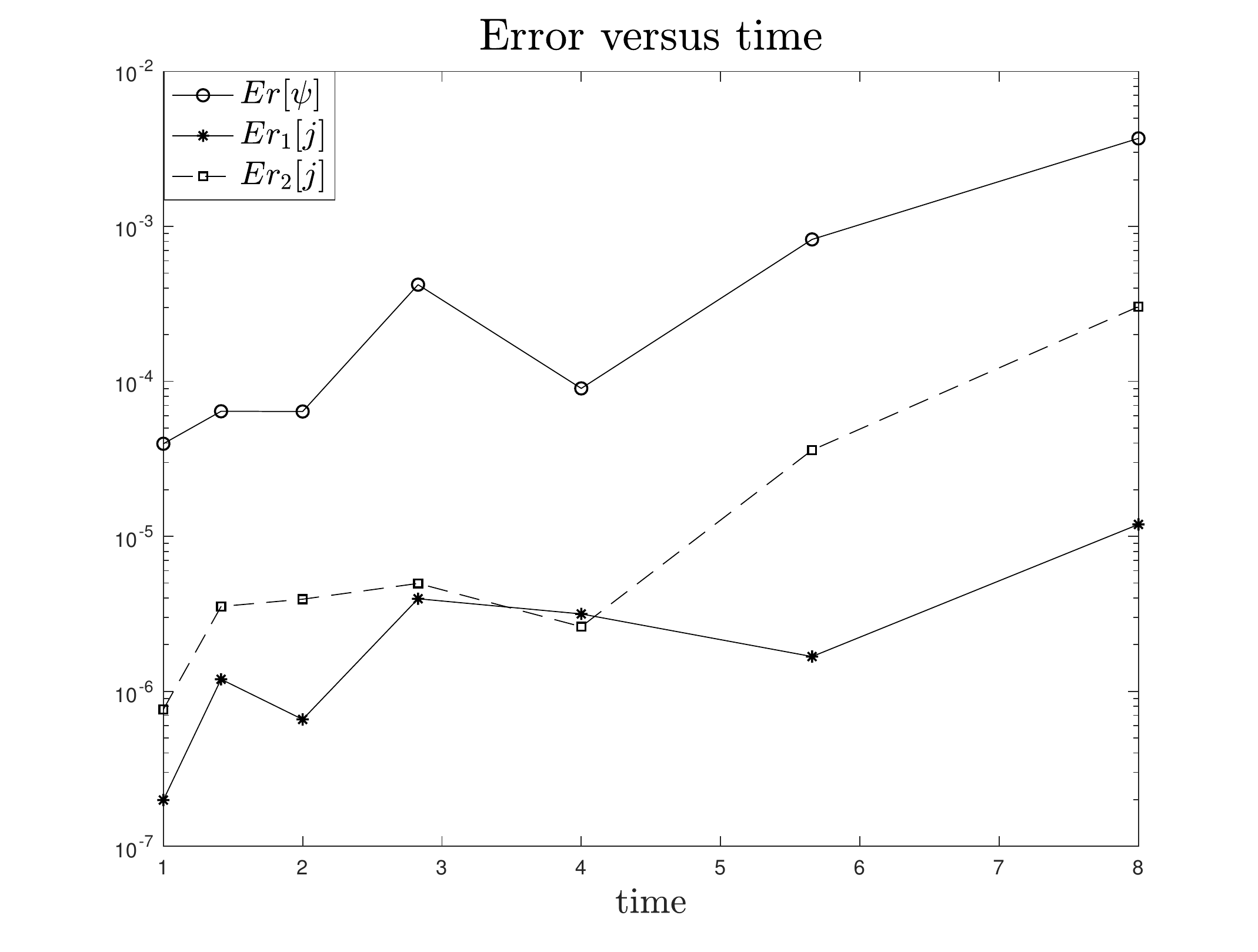}            
\caption{A semi-log plot of the errors versus time shown in Table \ref{Time-T}. }
\label{Time-P}
\end{figure}

In Table \ref{Time-T}, we compute the errors of 
$\psi$, and mean and standard deviation of $j$ with respect to different output time, 
with a corresponding semi-log plot shown in Figure \ref{Time-P}. 
One can observe that the overall trend of all the errors increase as time becomes longer. This trend is similar as that in the counterpart
deterministic problem \cite{GWPT}. 

\section{Conclusion and future work}
\label{sec:5}

In this paper, we consider random potential or initial data in the semi-classical Schr{\"o}dinger equation. Based on the Gaussian wave packet transform 
numerical method studied for the deterministic problem in \cite{GWPT}, we adopt the stochastic collocation method to numerically compute the Schr{\"o}dinger equation with random inputs. We analyze how the number of collocation points needed at each step depends on the Planck constant 
$\e$, according to regularity analyses of the solution $\psi$ and $w$ in the random space. A variety of numerical experiments demonstrate our arguments and efficiency of the proposed numerical method. 

In the future, we propose to work on high-dimensional random variable test and develop efficient numerical solvers. 
Multidimensional physical space was considered in \cite{GWPT2}, and we expect to study the counterpart problem with uncertainties. 
In \cite{ZR}, the semi-classical Schr{\"o}dinger equation in the presence of electromagnetic field was reformulated by 
the Gaussian wave packets transform. It is interesting to consider random electromagnetic field and extend the numerical method studied
in \cite{ZR} to the random case.

\bibliographystyle{siam}
\bibliography{GWPT_Ref}
\end{document}